\author{Tuomas Orponen}
\title{Improving Kaufman's exceptional set estimate for packing dimension}
\address{University of Helsinki, Department of Mathematics and Statistics}
\subjclass[2010]{28A75 (Primary)}
\thanks{T.O. is supported by the Academy of Finland through the grant Restricted families of projections and connections to
Kakeya type problems.}
\email{tuomas.orponen@helsinki.fi}
\newcommand{\R}{\mathbb{R}}
\newcommand{\N}{\mathbb{N}}
\newcommand{\Z}{\mathbb{Z}}
\newcommand{\calT}{\mathcal{T}}
\newcommand{\calL}{\mathcal{L}}
\newcommand{\calH}{\mathcal{H}}
\newcommand{\calB}{\mathcal{B}}
\newcommand{\spt}{\operatorname{spt}}
\newcommand{\Hd}{\dim_{\mathrm{H}}}
\newcommand{\Pd}{\dim_{\mathrm{p}}}
\newcommand{\Bd}{\overline{\dim}_{\mathrm{B}}}
\newcommand{\spa}{\operatorname{span}}
\newcommand{\diam}{\operatorname{diam}}
\newcommand{\dist}{\operatorname{dist}}
\numberwithin{equation}{section}
\theoremstyle{plain}
\newtheorem{thm}[equation]{Theorem}
\newtheorem{conjecture}[equation]{Conjecture}
\newtheorem{cor}[equation]{Corollary}
\newtheorem{proposition}[equation]{Proposition}
\newtheorem{question}{Question}
\theoremstyle{definition}
\newtheorem{definition}[equation]{Definition}
\theoremstyle{remark}
\newtheorem{remark}[equation]{Remark}
\begin{document}

\begin{abstract} Given $0 < s < 1$, I prove that there exists a constant $\epsilon = \epsilon(s) > 0$ such that the following holds. Let $K \subset \R^{2}$ be a Borel set with $\calH^{1}(K) > 0$, and let $E_{s}(K) \subset S^{1}$ be the collection of unit vectors $e$ such that 
\begin{displaymath} \Pd \pi_{e}(K) \leq s. \end{displaymath}
Then $\Hd E_{s}(K) \leq s - \epsilon$.
\end{abstract}

\maketitle

\section{Introduction}

This paper is concerned with a classical question in fractal geometry: how do orthogonal projections affect the dimension of planar sets? For a reader not familiar with the area, I recommend the recent survey \cite{FFJ} of Falconer, Fraser and Jin. In this introduction, I only describe some results most relevant to the new material. 

Fix $0 \leq s \leq 1$, and let $K \subset \R^{2}$ be a Borel set of Hausdorff dimension $\Hd K \geq s$. In 1968, Kaufman \cite{Ka} proved, improving an earlier result of Marstrand \cite{Ma} from 1954, that
\begin{equation}\label{kaufman} \Hd \{e \in S^{1} : \Hd \pi_{e}(K) < s\} \leq s. \end{equation}
Here $\pi_{e} \colon \R^{2} \to \R$ is the orthogonal projection $\pi_{e}(x) = x \cdot e$. Under the assumption $\Hd K \geq s$, Kaufman's bound \eqref{kaufman} is sharp: in 1975, Kaufman and Mattila \cite{KM} constructed an explicit compact set $K \subset \R^{2}$ with $\Hd K = s$ such that
\begin{equation}\label{sharpness} \Hd \{e : \Hd \pi_{e}(K) < s\} = s. \end{equation}
Under the assumption $\Hd K \geq t > s$, the sharpness of \eqref{kaufman} is an open problem. The following improvement is conjectured (in (1.8) of \cite{Ob}, for instance):
\begin{conjecture}\label{mainC} Assume that $0 \leq t/2 \leq s \leq t \leq 1$ and $\Hd K \geq t$. Then
\begin{equation}\label{mainC2} \Hd \{e \in S^{1} : \Hd \pi_{e}(K) < s\} \leq 2s - t. \end{equation}
\end{conjecture}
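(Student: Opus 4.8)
The plan is to turn \eqref{mainC2} into a statement about Furstenberg sets, by a $\delta$-discretisation followed by the point--line duality of the plane, and then to invoke the (conjectural) Furstenberg set estimate; the hard part is contained entirely in that last input.

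\emph{Step 1: discretisation.} Suppose, towards a contradiction, that $\Hd K \geq t$ while $\Hd \{e \in S^{1} : \Hd \pi_{e}(K) < s\} > u$ for some $u > 2s - t$. We may assume $s < 1$ (if $s = 1$ then $t = 1$ and $2s - t = 1 \geq \Hd S^{1}$, so \eqref{mainC2} is trivial). Fix a compact $E \subset S^{1}$ with $\Hd E > u$ on which $\Hd \pi_{e}(K) < s$, and a Frostman measure $\mu$ on $K$ of some exponent $t' \in (s,t)$, chosen close enough to $t$ that still $u > 2s - t'$ (such $t'$ exists precisely because $s < t$ and $u > 2s - t$). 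Since $\Hd \pi_{e}(K) < s$ for every $e \in E$, a standard, if somewhat technical, multi-scale pigeonholing — of the kind carried out in the Hausdorff-to-single-scale reductions in \cite{Ob, FFJ} — produces a single scale $\delta = 2^{-n}$, arbitrarily small along a subsequence, a $\delta$-separated $(\delta,u)$-set $\mathcal{E} \subset E$ of cardinality $\approx \delta^{-u}$, a $\delta$-separated $(\delta,t')$-set $\mathcal{K}$ carried by $\mu$ of cardinality $\approx \delta^{-t'}$, and, for each $e \in \mathcal{E}$, a collection of $\lesssim \delta^{-s}$ strips of width $\delta$ with common unit normal $e$ whose union contains $\mathcal{K}$. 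It remains to rule out such a configuration for arbitrarily small $\delta$.

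\emph{Step 2: passage to a Furstenberg set.} Each strip above is a $\delta$-tube, and their union $\mathcal{T}$ over $e \in \mathcal{E}$ is a $\delta$-separated family of at most $\delta^{-s-u}$ $\delta$-tubes: tubes from distinct pencils have $\delta$-separated normal directions, and within a pencil the $\lesssim \delta^{-s}$ offsets may be taken $\delta$-separated. Moreover every $x \in \mathcal{K}$ lies in one tube of each of the $\approx \delta^{-u}$ pencils, and these $\approx \delta^{-u}$ tubes are pairwise distinct. Dualising via the point--line duality of $\R^{2}$, $\mathcal{K}$ becomes a $(\delta,t')$-set of lines $\{L_{x}\}_{x \in \mathcal{K}}$, $\mathcal{T}$ becomes a $\delta$-separated set $\mathcal{P}$ of at most $\delta^{-s-u}$ points, and the incidence relation is preserved up to harmless constants: each line $L_{x}$ passes $\delta$-close to $\gtrsim \delta^{-u}$ of the points of $\mathcal{P}$. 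In other words $\mathcal{P}$ is a $\delta$-discretised $(u,t')$-Furstenberg set. The ($\delta$-discretised) Furstenberg set estimate, $\card \mathcal{P} \gtrsim_{\epsilon} \delta^{-\min\{u+t',\,(3u+t')/2,\,u+1\}+\epsilon}$, combined with $\card \mathcal{P} \leq \delta^{-s-u}$, forces $s + u \geq \min\{u+t',\,(3u+t')/2,\,u+1\} - \epsilon$. If the minimum is $u+t'$ this reads $t' \leq s + \epsilon$, and if it is $u+1$ it reads $s \geq 1 - \epsilon$ — both impossible once $\epsilon$ is small, since $t' > s$ and $s < 1$ — while if it is $(3u+t')/2$ it reads $u \leq 2s - t' + 2\epsilon$. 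Taking $\epsilon$ small and then $t' \to t$ thus contradicts $u > 2s - t$ in every case, which would complete the proof modulo the one nontrivial input, discussed next.

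\emph{The main obstacle.} Everything above is routine except the Furstenberg set estimate invoked in Step 2, and the difficulty of Conjecture \ref{mainC} sits there in its entirety. The $L^{2}$ / energy method — which is, in essence, Kaufman's proof of \eqref{kaufman} — sees only the trivial lower bound for $\card \mathcal{P}$ and hence recovers merely $u \leq s$; to reach $u \leq 2s - t$ one genuinely needs the sharp Furstenberg bound in the range $s < t' \leq 1$, equivalently — by the duality above — a sharp Szemer\'edi--Trotter-type estimate for $\delta$-separated $\delta$-tubes and $\delta$-balls. That estimate is open: Conjecture \ref{mainC} and the Furstenberg set conjecture are, through this correspondence, essentially the same problem, and the sharpness of both at $t = s$ is already witnessed by the Kaufman--Mattila example \eqref{sharpness}. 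The partial inputs currently available — Wolff's $s + \tfrac{1}{2}$-type bound, Bourgain's discretised projection and sum--product theorems, and their refinements — fall short of the middle branch $(3u+t')/2$, and so deliver, in place of \eqref{mainC2}, only a bound of the shape $\max\{0,\,2s-t\} + \epsilon$; near $t = 1$ this degenerates further to $s - \epsilon$. Obtaining that last, unconditional $s - \epsilon$ improvement — in the variant with the \emph{packing}-dimension exceptional set, and under the stronger hypothesis $\calH^{1}(K) > 0$ — is what the present paper establishes.
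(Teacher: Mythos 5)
The statement you are asked to prove is labelled a \emph{Conjecture} in the paper, and the paper contains no proof of it; it is stated as an open problem and the paper's actual theorem is the much weaker packing-dimension estimate $\Hd\{e : \Pd\pi_{e}(K) \leq s\} \leq s - \epsilon$. Your proposal does not close this gap: its entire content is a reduction of \eqref{mainC2} to the sharp discretised $(u,t')$-Furstenberg estimate $\card\mathcal{P} \gtrsim_{\epsilon} \delta^{-\min\{u+t',(3u+t')/2,u+1\}+\epsilon}$, and you then state explicitly that this estimate ``is open.'' A conditional argument whose single nontrivial input is an unproved conjecture of at least the same strength is not a proof; the middle branch $(3u+t')/2$, which is exactly the branch your Step 2 needs to extract $u \leq 2s - t'$, is precisely the part of the Furstenberg conjecture that was unavailable. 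Note also that the paper itself is more cautious than you are about this correspondence: it describes the Kaufman--Furstenberg link as ``a guideline, not an established fact,'' with the only rigorous implication (Oberlin \cite{Ob2}) running in the \emph{opposite} direction and only for ``toy'' Furstenberg sets.

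That said, the reduction itself is essentially sound as a sketch and is the standard way the community frames the problem: the point--line duality sending $\delta$-tubes to $\delta$-balls does convert the configuration of Step 1 into a dual $(u,t')$-Furstenberg configuration, and the arithmetic $s + u \geq (3u+t')/2 \Rightarrow u \leq 2s - t'$ is correct, as is the disposal of the other two branches using $t' > s$ and $s < 1$. Two caveats beyond the main gap: (i) the ``standard multi-scale pigeonholing'' in Step 1 is genuinely delicate, since $\Hd\pi_{e}(K) < s$ gives covering information at scales that vary with $e$, and extracting a single common scale $\delta$ together with a $(\delta,u)$-set of directions and a $(\delta,t')$-set of points costs more than a remark --- it is doable but should not be waved through; (ii) even granting everything, you would obtain $u \leq 2s - t' + O(\epsilon)$ for $t' < t$, so the endpoint statement \eqref{mainC2} (with $\leq 2s-t$ exactly, and with $\Hd K \geq t$ rather than a Frostman exponent strictly below $t$) requires an additional limiting argument. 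The honest conclusion of your write-up --- that the unconditional content currently available is only an $\epsilon$-improvement of Kaufman's bound, which is what the paper proves for packing dimension --- is correct, but it confirms rather than resolves the conjecture.
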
 
It is known that the right hand side of \eqref{mainC2} cannot be further improved. 

So, what are the partial results for Conjecture \ref{mainC}, and why is the problem worth studying? The case $s \approx t/2$ attracted a great deal of attention around 2003, when Edgar and Miller \cite{EM} and Bourgain \cite{Bo1} independently proved the Erd\H{o}s-Volkmann ring conjecture. The conjecture -- now a theorem -- states that $\R$ does not contain Borel subrings of Hausdorff dimension $r \in (0,1)$. To explain the connection to Conjecture \ref{mainC}, assume for a moment that there existed a Borel ring $B \subset \R$ with $\Hd B = r$ for some $0 < r \leq 1/2$. Then $\Hd (B \times B) \geq 2r$, and $B + hB \subset B$ for all $h \in B$. This implies that
\begin{equation}\label{form33} \Hd \{e \in S^{1} : \Hd \pi_{e}(B \times B) \leq r\}) \geq r, \end{equation}
severely violating \eqref{mainC2} for $s$ close to $r$ and $t = 2r$. Thus, Conjecture \ref{mainC} is stronger than the ring conjecture. In fact, the ring conjecture is no stronger than "a slight improvement over Kaufman's bound \eqref{kaufman} in the case $s = t/2$". To see this, note that Kaufman's bound \eqref{kaufman} implies
\begin{displaymath} \Hd \{e \in S^{1} : \Hd \pi_{e}(B \times B) < r\} \leq r, \end{displaymath}
which barely fails to contradict \eqref{form33}. So, a result of the form
\begin{equation}\label{form34} \Hd \{e : \Hd \pi_{e}(K) \leq r\} \leq r - \epsilon \end{equation}
for $0 < r \leq (\Hd K)/2$ would already be strong enough to settle the ring conjecture, and this is significantly weaker than Conjecture \ref{mainC}. Bourgain's approach to the ring problem gives \eqref{form34}, and in fact something quite a bit better, which sits between \eqref{form34} and the conjectured bound \eqref{mainC2}: Theorem 3 in \cite{Bo} implies that
\begin{equation}\label{form35} \Hd \{e : \Hd \pi_{e}(K) \leq s\} \searrow 0 \end{equation}
as $s \searrow (\Hd K)/2$. The biggest caveat is that \eqref{form35} says nothing about values of $s$ far from $(\Hd K)/2$. For instance, assuming that $\Hd K = 1$, the best bound for $\Hd \{e : \Hd \pi_{e}(K) \leq 3/4\}$ remains the one given by Kaufman's bound \eqref{kaufman}, namely $\Hd \{e : \Hd \pi_{e}(K) \leq 3/4\} \leq 3/4$. 

The ring conjecture was not the only motivation for Bourgain's work \cite{Bo} in 2003. Two years earlier, Katz and Tao \cite{KT} had proved that the case $t = 1/2$ of the ring conjecture is "roughly" equivalent (more precisely: equivalent at the level of certain "discretised" versions) to obtaining small improvements in Falconer's distance set problem and the $(1/2)$-Furstenberg set problem. I recall the latter question
\begin{question}[Furstenberg set problem]\label{furstenberg} Assume that a set $K \subset \R^{2}$ has the property that for every $e \in S^{1}$, there exists a line of the form $L_{a,e} := a + \spa(e)$, $a \in \R^{2}$, such that $\Hd (K \cap L_{a,e}) \geq s$. Such a set $K$ is called an $s$-Furstenberg set. How small can the dimension of an $s$-Furstenberg set be? \end{question}
Until Bourgain's work in 2003, the best result on Question \ref{furstenberg} was due to Wolff \cite{Wo}, who proved that $\Hd K \geq \max\{s + 1/2,2s\}$ for every $s$-Furstenberg set $K$. In the case $s = 1/2$, Bourgain could improve Wolff's result by a small absolute constant $c > 0$, namely showing that $\Hd K \geq 1 + c$. To sum up, a slight improvement of the type \eqref{form34} for Kaufman's bound \eqref{kaufman} in the case $\Hd K = 1$ and $s = 1/2$ is stronger than the case $t = 1/2$ of the ring conjecture, which is, further, "roughly" equivalent to proving an improvement for the dimension of $(1/2)$-Furstenberg sets. This is not a rigorous argument, but it is a fair guideline.

How about $s$-Furstenberg sets for $s \in (1/2,1)$? For $s$ "very close" to $1/2$, Bourgain's approach still gives an improvement over the Wolff bound, just as \eqref{form35} gives an improvement to Kaufman's bound for $s$ "very close" to $1/2$. But for $s = 3/4$, say, the best dimension bound for $s$-Furstenberg sets remains Wolff's estimate $\min\{1/2 + s,2s\} = 2s$. And, heuristically, improving Kaufman's bound for $\Hd K = 1$, and any $s \in (1/2,1)$, is "close" to improving Wolff's $2s$-bound for the \textbf{same} value of $s$. Unfortunately, this is only a guideline, not an established fact; as far as I know, the only rigorous argument in this vein is contained in D. Oberlin's paper \cite{Ob2}. There, an improvement to Kaufman's bound for projections is shown to imply an improvement over Wolff's bound for certain "toy" Furstenberg sets, which arise from a special, if natural, construction. So, even if the the Kaufman and Furstenberg problems are perhaps not equivalent for every $s \in (1/2,1)$, it is not outrageous exaggeration to claim that the former acts as a toy question towards the latter. 

The aim of this paper is to study Kaufman's bound for $\Hd K = 1$, and for any $1/2 < s < 1$ (the cases $s = 1/2$ and $0 \leq s < 1/2$ were solved by Bourgain \cite{Bo} and Oberlin \cite{Ob}, respectively). Here is the main result:

\begin{thm}\label{mainPacking} Given $1/2 < s < 1$, there exists a constant $\epsilon = \epsilon(s) > 0$ such that the following holds. If $K \subset \R^{2}$ be a Borel set with $\calH^{1}(K) > 0$, then
\begin{displaymath} \Hd(\{e \in S^{1} : \Pd \pi_{e}(K) \leq s\}) \leq s - \epsilon. \end{displaymath}
Here $\Pd$ stands for packing dimension. 
\end{thm}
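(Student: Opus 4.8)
The plan is to reduce the packing-dimension statement to a statement about Hausdorff dimension and then run a quantitative version of Kaufman's projection argument. The first step is to exploit the well-known characterization of packing dimension via Hausdorff dimension: for any set $A \subset \R$, one has $\Pd A = \inf\{\sup_{i} \Hd A_{i} : A \subset \bigcup_{i} A_{i}\}$. Combined with a standard subadditivity/pigeonhole argument, if $\Pd \pi_{e}(K) \leq s$ for every $e$ in a set $E \subset S^{1}$ of positive Hausdorff dimension, then there is a single index producing a subset $F \subset E$ with $\Hd F = \Hd E$ (up to an $\epsilon/2$ loss) and a Borel decomposition-compatible way of saying that "a definite portion of $\pi_{e}(K)$ has small Hausdorff dimension" for $e \in F$. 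The cleanest route is: pass to a compact subset $K' \subset K$ with $0 < \calH^{1}(K') < \infty$ (so $K'$ carries a $1$-Frostman measure $\mu$ with $\mu(B(x,r)) \lesssim r$), and observe that it suffices to prove the Hausdorff-dimension version of the theorem, namely that $\Hd\{e : \Hd \pi_{e}(K) \leq s\} \leq s - \epsilon$ when $\calH^{1}(K) > 0$, because packing dimension dominates Hausdorff dimension and one can localize. Actually the genuine content is that the set where the \emph{packing} dimension is small can still be covered, pointwise in $e$, by finitely many pieces of small Hausdorff dimension, and a Baire-category / pigeonhole step over a countable exhaustion makes one piece do the work for a positive-dimensional set of directions; this is where the reduction "packing $\leadsto$ Hausdorff" is actually consumed.

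Having reduced to Hausdorff dimension, the second step is the core estimate: if $\mu$ is a $1$-Frostman measure on $\R^{2}$ and $\nu$ is an $\sigma$-Frostman measure on a set $E \subset S^{1}$ of directions along which $\Hd \pi_{e}(K) \leq s$, I want to derive a contradiction when $\sigma > s - \epsilon$. The classical Kaufman argument bounds the energy $I_{s}(\pi_{e\sharp}\mu)$ and integrates in $e$ against $\nu$, getting $\int I_{s}(\pi_{e\sharp}\mu)\, d\nu(e) \lesssim I_{s}(\mu)\cdot(\text{something finite when } \sigma \leq s)$, which gives $\Hd\{e : \Hd\pi_e(K) < s\} \leq s$ but no $\epsilon$. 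To gain the $\epsilon$ I would instead run the argument at a discretized scale $\delta$ and bring in an \emph{incidence/Furstenberg-type} input. Concretely: discretize $K$ at scale $\delta$ into $\sim \delta^{-1}$ squares and $E$ into $\sim \delta^{-\sigma}$ arcs; the hypothesis $\Hd\pi_{e}(K) \le s$ forces, for each good direction $e$, the $\delta$-projection of $K$ to be covered by $\sim \delta^{-s}$ intervals, i.e. many pairs of $\delta$-squares of $K$ collapse under $\pi_e$. This is exactly the configuration controlled by the recent Furstenberg-set / projection machinery (Orponen, Bourgain's discretized sum–product and projection theorems); since $\sigma$ close to $s < 1$ and the "dimension" of $K$ is $1 > s$, one is strictly off the sharp exponent, and a discretized projection theorem (in the spirit of Bourgain's theorem, or Orponen's work on exceptional sets using entropy/multiscale analysis) yields a quantitative gain: the number of directions for which the projection is this small is at most $\delta^{-s+\epsilon}$, with $\epsilon = \epsilon(s) > 0$.

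The third step is bookkeeping: upgrade the discretized bound, valid at every small dyadic scale $\delta$ with uniform $\epsilon$, to the continuous statement $\Hd E \le s - \epsilon$ via a standard limiting argument (covering $E$ at scales $\delta = 2^{-k}$ and summing a geometric series), and then unwind the reduction from the first step to conclude the packing-dimension bound, absorbing the various $\epsilon/2$ losses into a single $\epsilon(s)$.

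I expect the main obstacle to be the second step — making the quantitative gain honest and uniform. The difficulty is twofold: first, one needs a discretized projection/incidence estimate that produces a gain $\epsilon$ depending only on $s$ (and not deteriorating as $s \uparrow 1$ or $s \downarrow 1/2$), which requires carefully choosing the right black-box theorem (Bourgain's projection theorem for Frostman measures, or a Furstenberg-type estimate) and verifying its hypotheses are met by the discretized $K$ and $E$; second, the hypothesis only controls packing dimension, which at the discretized level means the small-cover bound $\delta^{-s}$ holds only along a \emph{subsequence} of scales for each fixed $e$, and these subsequences depend on $e$ — so one must first perform a pigeonhole/uniformization over scales (again using the packing-to-Hausdorff reduction, or a direct multiscale argument) to find a single scale $\delta$ at which a positive-$\nu$-measure set of directions simultaneously exhibits the small projection. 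Reconciling these two issues — a uniform quantitative gain on one hand, and the scale-dependence inherent in packing dimension on the other — is the crux of the proof.
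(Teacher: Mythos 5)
There are two genuine gaps here, and the first is fatal as stated. Your opening reduction claims $\Pd A = \inf\{\sup_{i} \Hd A_{i} : A \subset \bigcup_{i} A_{i}\}$ and concludes that it suffices to prove $\Hd\{e : \Hd \pi_{e}(K) \leq s\} \leq s - \epsilon$. The characterization is false: since Hausdorff dimension is countably stable, that infimum equals $\Hd A$, not $\Pd A$; the correct characterization of packing dimension uses the \emph{upper box} dimension of the pieces. More importantly, the Hausdorff-dimension statement you reduce to is precisely the open problem this paper does \emph{not} solve -- the author states explicitly that the appearance of $\Pd$ is crucial to the strategy and that handling $\Hd$ requires a new idea. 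Since $\Pd \geq \Hd$, the set $\{e : \Pd \pi_{e}(K) \leq s\}$ is \emph{smaller} than $\{e : \Hd \pi_{e}(K) \leq s\}$, so your reduction goes in the wrong direction: it replaces the theorem by a strictly stronger, unproven claim. The paper's reduction is instead to upper box dimension (Theorem \ref{main}), which is \emph{weaker} than the Hausdorff version, and whose counter assumption $N(\pi_{e}(K),\delta) \leq \delta^{-s-\epsilon_{0}}$ holds at \emph{every} sufficiently small scale simultaneously; this uniformity over scales is exactly what the rest of the argument consumes, and it also disposes of the "subsequence of scales" worry you raise at the end.

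The second gap is in your core step: you invoke "discretized projection / Furstenberg machinery" as a black box to produce the $\epsilon$-gain, but for $s \in (1/2,1)$ bounded away from $1/2$ no such black box existed -- Bourgain's theorem only improves Kaufman's bound near $s = 1/2$, and (as the introduction notes) the case $s = 3/4$ was open. The paper's actual mechanism is: (i) a new proposition for product-like sets $P = \bigcup_{b \in B} A_{b} \times \{b\}$, with $B$ a $(\delta,\tau)$-set and each $A_{b}$ a $(\delta,s)$-set (Proposition \ref{productProp}), proved by combining the Balog-Szemer\'edi-Gowers theorem, the Pl\"unnecke-Ruzsa inequality, and Bourgain's discretized projection theorem applied to a $2s$-dimensional product $cD^{2} \times D^{2}$; and (ii) a two-scale argument that pigeonholes a scale $\delta$ at which $K$ is one-dimensional at both $\delta^{1/2}$ and $\delta$, uses the counter assumption at both scales, and locates a $\delta^{1/2}$-tube whose rescaled intersection with $K$ becomes such a product-like set with too many small projections. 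Your proposal contains neither the product-set proposition nor the two-scale tube construction, and without them the claimed quantitative gain has no source.
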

Theorem \ref{mainPacking} does not improve over \eqref{kaufman}, since it only gives an upper bound for the dimension of $\{e : \Pd \pi_{e}(K) \leq s\}$ (a subset of $\{e : \Hd \pi_{e}(K) \leq s\}$). However, to the best of my knowledge, the bound "$s$" given by \eqref{kaufman} was, up to now, the best available even for $\Hd \{e : \Pd \pi_{e}(K) \leq s\}$. The assumption $\calH^{1}(K) > 0$ is a matter of convenience and could easily be relaxed to $\Hd K = 1$. As far as I can tell, the appearance of $\Pd$ is quite crucial for the proof strategy, and dealing with $\Hd$ requires a new idea. On the other hand, there is some hope that the proof strategy behind Theorem \ref{mainPacking} could give a an $\epsilon$-improvement over Wolff's bound for the upper box dimension of Furstenberg $s$-sets, $1/2 < s < 1$. This requires further investigation.

\subsection{Outline of the proof} In short, the proof of Theorem \ref{mainPacking} consists of two steps. One is to consider special sets $K$, which are roughly of the form $K = A \times B$, where $A$ is $s$-dimensional and $B$ is $(1 - s)$-dimensional. For such sets, one can prove Theorem \ref{mainPacking} by a direct argument, which uses tools from additive combinatorics (see Section \ref{productSets} below). The second step is to reduce the proof for general sets to the special case. This involves first pigeonholing a suitable scale $\delta > 0$ to work on. Then, one makes a counter assumption (namely: $K$ has an almost $s$-dimensional set of $s$-dimensional projections) both at scales $\delta^{1/2}$ and $\delta$. This evidently relies on having information about $\Pd \pi_{e}(K)$ and not just $\Hd \pi_{e}(K)$. If the counter assumption is strong enough, one can find the following structure inside $K$: there is a $(\delta^{1/2} \times 1)$-tube $T$ such that if one blows up $T \cap K$ into the unit square, then the resulting set $\tilde{K}$ "behaves like a $s \times (1 - s)$-dimensional product set with an almost $s$-dimensional set of $s$-dimensional projections". In effect, this means that the existence of $\tilde{K}$ contradicts the result obtained in the first part of the proof. Hence, one finally obtains a contradiction.

\section{Preliminaries and Kaufman's bound}

The purpose of this section is to record some preliminaries, notation and auxiliary results, and give a quick proof of the well-known and easy estimate $\Hd \{e : \Pd \pi_{e}(K) \leq s\} \leq s$ (that is, Theorem \ref{main} without the $\epsilon$-improvement).

First, I observe that in place of Theorem \ref{mainPacking}, it suffices to prove its analogue for \emph{upper box dimension}:
\begin{thm}\label{main} Given $1/2 < s < 1$, there exists a constant $\epsilon = \epsilon(s) > 0$ such that the following holds. If $K \subset \R^{2}$ be a compact set with $\calH^{1}(K) > 0$, then
\begin{displaymath} \Hd(\{e \in S^{1} : \Bd \pi_{e}(K) \leq s\}) \leq s - \epsilon. \end{displaymath} 
\end{thm}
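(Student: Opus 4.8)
The plan is to follow the two-step strategy indicated in the introduction, working throughout in a $\delta$-discretised setting. After passing to a subset if necessary (a classical theorem of Besicovitch and Davies) we may assume $0<\calH^1(K)<\infty$, and we fix a Frostman measure $\mu$ on $K$ with $\mu(B(x,r))\lesssim r$. Negating Theorem~\ref{main} would supply, for each $\eta>0$, a closed set $E\subseteq S^1$ and a measure $\nu$ on $E$ with $\nu(B(e,r))\lesssim r^{s-\epsilon}$ such that $\Bd\pi_e(K)\le s$ for every $e\in E$. The point of working with box (hence packing) dimension is that the bound $N(\pi_e(K),\rho)\le\rho^{-s-\eta}$ then holds for \emph{all} small $\rho$; after one pigeonholing over $E$ this can be arranged uniformly on a subset $E'\subseteq E$ of comparable $\nu$-mass, simultaneously at the two scales $\delta$ and $\delta^{1/2}$, with $\delta>0$ as small as we wish. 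Two-scale control of this type is exactly what a Hausdorff-dimension hypothesis on the projections would fail to provide, and the pair $\delta,\delta^{1/2}$ is chosen so that blowing up a width-$\delta^{1/2}$ tube returns us to a scale ($\delta^{1/2}$) on which we still have information.

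\emph{Step 1: the product case.} The first task is a discretised form of Theorem~\ref{main} for sets which, up to the discretisation, are products $A\times B$, with $A\subseteq[0,1]$ a $(\delta,s)$-set (a union of $\approx\delta^{-s}$ intervals of length $\delta$ obeying a Frostman-type non-concentration bound) and $B\subseteq[0,1]$ a $(\delta,\beta)$-set; since $\calH^1(A\times B)>0$ one has $s+\beta\gtrsim1$, so in particular $\beta>0$. For $e=(\cos\theta,\sin\theta)$ one has $\pi_e(A\times B)=\cos\theta\cdot A+\sin\theta\cdot B$, which up to a harmless rescaling is the sumset $A+tB$ with $t=\tan\theta$. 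Thus an exceptional set of directions of dimension $\approx s$ would force $N(A+tB,\delta)\lesssim\delta^{-s}\approx N(A,\delta)$ for a $\approx s$-dimensional family of slopes $t$ --- very many dilates of the positive-dimensional set $B$ ``compressing into'' $A$. Since $s<1$, this cannot happen, with a quantitative loss $\delta^{-\epsilon_0}$ for some $\epsilon_0=\epsilon_0(s)>0$, by a discretised sum--product / Bourgain projection estimate; the hypothesis $s>1/2$ enters here because for smaller $s$ the sumsets are sparse enough that soft arguments suffice (this is essentially Oberlin's case), whereas for $s>1/2$ the additive-combinatorial input is genuinely required. The output is: the number of $\delta$-separated $e$ with $N(\pi_e(A\times B),\delta)\le\delta^{-s}$ is at most $\delta^{-(s-\epsilon_0)}$.

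\emph{Step 2: reduction of the general case to the product case.} Using the two-scale counter-assumption, fix $e_0\in E'$ and tile $[0,1]^2\supseteq K$ by the $(\delta^{1/2}\times1)$-tubes $T_I=\pi_{e_0}^{-1}(I)$, $|I|=\delta^{1/2}$. The scale-$\delta^{1/2}$ bound confines $K$ to $\lesssim\delta^{-s/2}$ of these tubes, so some $T=T_I$ carries $\gtrsim\delta^{-(2-s)/2}$ of the $\approx\delta^{-1}$ $\delta$-squares of $K$. Analysing how the $\delta$- versus $\delta^{1/2}$-substructure of the $\pi_e(K)$, $e\in E'$, distributes $K\cap T$ inside $T$, one finds that the anisotropic blow-up carrying $T$ onto the unit square turns $K\cap T$ into a set $\tilde K$ comparable to a product set: $\approx s$-dimensional transverse to the former tube and of dimension $\gtrsim1-s$ along it, hence still $\gtrsim1$-dimensional in the discretised sense. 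The delicate step is to exhibit a dimension-$\approx(s-\epsilon')$ subfamily of $E'$ that survives the blow-up as exceptional directions for $\tilde K$ at the rescaled inner scale $\delta^{1/2}$: the blow-up distorts $S^1$ severely, so one must restrict to directions close to $e_0$ and track how $\pi_e|_T$ becomes, after rescaling, a projection of $\tilde K$ with $\delta^{1/2}$-covering number $\le(\delta^{1/2})^{-s}$. Granting this, $\tilde K$ contradicts the Step~1 bound at scale $\delta^{1/2}$, provided the constant $\epsilon$ in Theorem~\ref{main} is taken small enough (below a threshold depending on $\epsilon_0(s)$ and the losses in Step~2) and $\delta$ is small enough that the $\delta^{-\epsilon_0}$ gain beats the accumulated constants. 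This proves Theorem~\ref{main}.

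I expect the two genuinely hard points to be: (a) the additive combinatorics of Step~1 --- converting ``$A+tB$ small for an $s$-dimensional set of slopes'' into a contradiction with a \emph{uniform} $\epsilon_0(s)>0$ calls for a robust discretised sum--product/projection theorem together with careful bookkeeping of non-concentration hypotheses through the rescalings; and (b) the extraction in Step~2 of a single tube whose blow-up is simultaneously product-like \emph{and} still carries an almost $s$-dimensional set of exceptional directions. It is precisely in (b) that having the counter-assumption at both scales $\delta$ and $\delta^{1/2}$ --- hence the passage through packing/box dimension rather than Hausdorff dimension --- is indispensable, and where the distortion of directions under the anisotropic blow-up must be handled with care.
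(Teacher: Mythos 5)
Your outline reproduces the two-step strategy of the paper (product case plus a two-scale tube blow-up), but at the one point where the argument has real content your Step~1 does not work as stated. You propose to rule out ``$N(A+tB,\delta)\lesssim\delta^{-s}$ for an $s$-dimensional set of slopes $t$'' by ``a discretised sum--product / Bourgain projection estimate''. Bourgain's discretised projection theorem, applied directly to the $(\delta,s+\beta)$-set $A\times B$ with $s+\beta\approx 1$, only guarantees $N(\pi_e(A\times B),\delta)\gtrsim\delta^{-(s+\beta)/2-\epsilon}\approx\delta^{-1/2-\epsilon}$ outside a small exceptional set of directions; for $s>1/2$ this is strictly weaker than the needed lower bound $\delta^{-s-\epsilon}$, so no contradiction results. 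The actual work in the product case is an intermediate additive-combinatorial reduction: one counts incidences of the covering tubes to produce, for many triples $(b_1,b_2,b_3)\in B^3$, large graphs $G'_{b_1,b_2,b_3}\subset A\times A$ whose images under the maps $(x,y)\mapsto x+\tfrac{b_2-b_1}{b_3-b_2}y$ are small; then Balog--Szemer\'edi--Gowers and Pl\"unnecke--Ruzsa extract subsets $D^1,D^2\subset A$ with $N(D^2+D^2,\delta)\lessapprox|A|$ and $N(D^1+c_bD^2,\delta)\lessapprox|A|$ for a $\tau$-dimensional family of coefficients $c_b$. Only then is Bourgain's theorem applied --- to the $2s$-dimensional set $cD^2\times D^2$ with the non-concentrated measure supported on $\{c_b\}$, where the exponent $\alpha/2=s$ is exactly right. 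Without this reduction your Step~1 is an assertion, not a proof; note also that the proposition actually needed is for sets $\bigcup_b A_b\times\{b\}$ with fibres varying in $b$, not literal products.

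Step~2 is likewise only the heuristic from the introduction. The points you would still need to supply include: the pigeonholing of a scale $\delta$ at which $K$ carries $(\delta,1)$- and $(\delta^{1/2},1)$-nets simultaneously; an energy estimate showing that a typical $\delta^{1/2}$-tube meets the coarse net in a $(\delta^{1/2},1-s)$-set (this is what makes the blown-up set $(1-s)$-dimensional in the vertical direction, i.e.\ gives $\beta\gtrsim 1-s$ --- it does not follow merely from $\calH^1(K)>0$); the fact that for a common direction $e_0$ many blocks $P_B$ have projections containing genuine $(\delta,s)$-sets, which is required so that the fibres $A_b$ of the blown-up set satisfy the non-concentration hypothesis of the product proposition; and the verification that $E\cap B(e_0,\delta^{1/2})$ is a $(\delta,s)$-set of cardinality $\approx\delta^{-s/2}$ which, after the anisotropic rescaling, becomes a full $(\delta^{1/2},s)$-set of exceptional directions for the blown-up set (you only ask for a ``dimension-$\approx(s-\epsilon')$ subfamily'', which would not suffice to invoke the product case). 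As it stands the proposal identifies the right architecture but leaves both load-bearing steps unproved.
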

Here $\Bd$ stands for the upper box (or Minkowski) dimension, which, for bounded sets $A \subset \R^{d}$, is defined by
\begin{displaymath} \Bd A := \limsup_{\delta \to 0} \frac{\log N(A,\delta)}{-\log \delta}. \end{displaymath}
The quantity $N(A,\delta)$ is the least number of balls of radius $\delta$ required to cover $A$. The fact that Theorem \ref{main} implies Theorem \ref{mainPacking} follows immediately from Lemma 4.5 in \cite{O2}.

The proof of Theorem \ref{main} proceeds by counter assumption and contradiction. Namely, I will assume that $\calH^{s - \epsilon_{0}/2}(\{e : \Bd \pi_{e}(K) \leq s\}) > 0$ for some (very small) $\epsilon_{0} > 0$. In particular, it follows that
\begin{displaymath} \{e \in S^{1} : N(\pi_{e}(K),\delta) \leq \delta^{-s - \epsilon_{0}/2} \text{ for all } 0 < \delta \leq \delta_{0}\} \end{displaymath}
has positive $(s - \epsilon_{0}/2)$-dimensional measure for small enough $\delta_{0} > 0$. Replacing $s$ by $s - \epsilon_{0}/2$ for notational convenience, I will assume that $\calH^{s}(E) > 0$, where
\begin{equation}\label{counterAss} E := \{e \in S^{1} : N(\pi_{e}(K),\delta) \leq \delta^{-s - \epsilon_{0}} \text{ for all } 0 < \delta \leq \delta_{0}\}. \end{equation}

Throughout the paper, I will use four types of "less than" inequality signs: $\leq$, $\lesssim$, $\lesssim_{\log}$ and $\lessapprox$. The first is most likely familiar to the reader, while $A \lesssim B$ means that there exists a constant $C \geq 1$ such that $A \leq CB$. If the dependence of $C$ on some parameter $p$ should be emphasised, this will be denoted by $A \lesssim_{p} B$. The inequality sign $A \lesssim_{\log} B$ means that
\begin{displaymath} A \lesssim \log^{C}(1/\delta) B, \end{displaymath}
where $C \geq 1$ is some constant (always quite small, $C \leq 10$), and $\delta > 0$ is a \emph{scale}, whose meaning will be clear later. Finally, the notation $A \lessapprox B$ means that
\begin{displaymath} A \leq C_{\epsilon_{0}}\delta^{C\epsilon_{0}}B. \end{displaymath}
Here $\epsilon_{0}$ is the "counter assumption parameter" from \eqref{counterAss}, $C_{\epsilon_{0}} \geq 1$ is a constant depending only on $\epsilon_{0}$ and "harmless parameters", and $C \geq 1$ is a constant depending only on "harmless parameters". These "harmless parameters" consist of quantities, which are regarded as "fixed" throughout the proof; a typical example is the number $s$. The notations $A \geq/\gtrsim/\gtrsim_{\log}/\gtrapprox B$ mean that $B \leq/\lesssim/\lesssim_{\log}/\lessapprox A$, and the notations $A =/\sim/\sim_{\log}/\approx B$ stand for two-sided inequalities.

If a little imprecision is allowed for a moment, the entire proof of Theorem \ref{main} will consist of a finite chain of inequalities of the form $A_{1} \lessapprox A_{2} \lessapprox \ldots \lessapprox A_{m}$, and finally the observation that $A_{1} \gtrsim \delta^{-\epsilon_{1}}A_{m}$ for some absolute constant $\epsilon_{1} > 0$. Thus, if $\delta,\epsilon_{0} > 0$ are small enough, a contradiction is reached.

The next definition contains a $\delta$-discretised analogue of "positive $t$-dimensional measure":

\begin{definition}[$(\delta,t)$-sets]\label{deltaTSet} Fix $\delta,t > 0$. A finite $\delta$-separated set $P \subset \R^{d}$ is called a $(\delta,t)$-set, if
\begin{equation}\label{deltaTSet} |P \cap B(x,r)| \lesssim_{\log} \left(\frac{r}{\delta} \right)^{t} \end{equation}
for all $x \in \R^{d}$ and $\delta \leq r \leq 1$. Here and below, $|\cdot |$ stands for cardinality. The set $P$ is called a \emph{generalised $(\delta,t)$-set}, if it satisfies the following relaxed version of \eqref{deltaTSet}:
\begin{displaymath} |P \cap B(x,r)| \lessapprox \left(\frac{r}{\delta} \right)^{t} \end{displaymath}
for all $x \in \R^{d}$ and $\delta \leq r \leq 1$.
\end{definition}

The definition of \emph{generalised $(\delta,t)$-set} is slightly vague, and the meaning will be best clarified in actual use below. In the proofs, a typical application is the following: a certain $\delta$-separated set $P$ is found, and one observes that the bound $|P \cap B(x,r)| \leq C_{\epsilon_{0}}\delta^{-C\epsilon_{0}}(r/\delta)^{t}$ holds for all $r \geq \delta$, and some constants $C,C_{\epsilon_{0}} \geq 1$. Then, Definition \ref{deltaTSet} allows me to call $P$ a (generalised) $(\delta,t)$-set without cumbersome book-keeping of the constants $C,C_{\epsilon_{0}}$.

The rationale behind the definition of $(\delta,s)$-sets is the fact that large $(\delta,s)$-sets can be found, for  any $\delta > 0$, inside a set with positive $s$-dimensional Hausdorff content. The following proposition is Proposition A.1 in \cite{FO} (the result in \cite{FO} is stated in $\R^{3}$, but the verbatim same proof works in every dimension):
\begin{proposition}\label{deltasSet} Let $\delta > 0$, and let $B \subset \R^{2}$ be a set with $\calH^{s}_{\infty}(B) =: \kappa > 0$. Then, there exists a $(\delta,s)$-set $P \subset B$ with cardinality $|P| \gtrsim \kappa \cdot \delta^{-s}$. In fact, the $(\delta,s)$-set property \eqref{deltaTSet} even holds with "$\lesssim$" instead of "$\lesssim_{\log}$" for $P$. \end{proposition}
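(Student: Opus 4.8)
The plan is to recast the statement dyadically and then distribute a total ``cardinality budget'' of size $\asymp\kappa\delta^{-s}$ along the dyadic tree of cubes meeting $B$. After rescaling I assume $B\subseteq[0,1)^{2}$, I fix the dyadic subcubes $\calD_{j}$ of $[0,1)^{2}$ and set $N:=\lceil\log_{2}(1/\delta)\rceil$ so that $2^{-N}\asymp\delta$, and I use that $\calH^{s}_{\infty}$ changes by at most a dimensional constant if one restricts to covers by dyadic cubes, together with the trivial bound $\calH^{s}_{\infty}(A)\leq(\diam A)^{s}$. The goal is to attach to each $Q\in\calD_{j}$ (say with $\calH^{s}_{\infty}(B\cap Q)>0$) a nonnegative integer weight $t(Q)$ such that: (i) $\sum_{R\in\calD_{N}}t(R)\gtrsim\kappa\delta^{-s}$; (ii) $\sum_{Q\in\calD_{j},\,Q\subseteq Q_{0}}t(Q)\leq t(Q_{0})$ whenever $Q_{0}\in\calD_{i}$ with $i\leq j\leq N$; and (iii) $t(Q)\leq(2^{-j}/\delta)^{s}$ for every $Q\in\calD_{j}$.

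Such weights would finish the proof. Declare $R\in\calD_{N}$ \emph{selected} if $t(R)\geq 1$; then $B\cap R\neq\emptyset$, so we may pick $p_{R}\in B\cap R$, and let $P$ be a maximal $\delta$-separated subset of $\{p_{R}:R\text{ selected}\}$, which costs only a dimensional factor since distinct $p_{R}$ sit in distinct $2^{-N}$-cubes and $2^{-N}\asymp\delta$. Because $t(R)\leq(2^{-N}/\delta)^{s}\asymp 1$ while $\sum_{R}t(R)\gtrsim\kappa\delta^{-s}$, the number of selected cubes, hence $|P|$, is $\gtrsim\kappa\delta^{-s}$. For the $(\delta,s)$-bound, fix $x$ and $\delta\leq r\leq 1$, let $2^{-j}$ be the least dyadic number $\geq r$, and cover $B(x,r)$ by $O(1)$ cubes $Q_{1},\dots,Q_{m}\in\calD_{j}$; then, using $t(R)\geq 1$ on selected cubes and (ii)--(iii),
\begin{displaymath} |P\cap B(x,r)|\leq\sum_{l=1}^{m}\#\{R\in\calD_{N}:R\subseteq Q_{l},\ R\text{ selected}\}\leq\sum_{l=1}^{m}\ \sum_{R\subseteq Q_{l}}t(R)\leq\sum_{l=1}^{m}t(Q_{l})\leq m\,(2^{-j}/\delta)^{s}\lesssim(r/\delta)^{s}. \end{displaymath}

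To build the weights I would start from $t(U):=\min\{\lceil\kappa\delta^{-s}\rceil,\lfloor\delta^{-s}\rfloor\}$ (so $t(U)\gtrsim\kappa\delta^{-s}$ and $t(U)\leq\delta^{-s}$, using $\kappa\lesssim 1$) and descend one dyadic level at a time by splitting weights in proportion to content: given $Q\in\calD_{i}$ with weight $t(Q)$, let $Q'_{1},\dots,Q'_{\beta}$ be its children meeting $B$, put $c_{l}:=\calH^{s}_{\infty}(B\cap Q'_{l})$ and $c:=\calH^{s}_{\infty}(B\cap Q)$, note $\sum_{l}c_{l}\geq c$ by subadditivity, and partition the integer $t(Q)$ into integers $t(Q'_{l})\geq 0$ summing to $t(Q)$, chosen as close as possible to the ideal shares $t(Q)c_{l}/\sum_{m}c_{m}$. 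The auxiliary inequality $t(Q)\leq c\,\delta^{-s}$ holds at $U$ and is preserved by proportional splitting (as $t(Q'_{l})/c_{l}\leq t(Q)/\sum_{m}c_{m}\leq t(Q)/c$); combined with $c\leq(\diam Q)^{s}$ it gives (iii), and it also pins down which children can legitimately receive a unit of weight (namely those with $c_{l}\geq\delta^{s}$), the rest being pruned. Property (ii) is built in since the split preserves total weight, and (i) will follow once we control the only source of loss.

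That source of loss --- and the step I expect to be the genuine obstacle --- is the integrality of the splits: a real-valued division would lose nothing, but rounding to integers can shed a bounded amount of weight at a node (precisely when the children's content is tight relative to the weight being passed on), and one must show these losses do not accumulate beyond a constant fraction of $t(U)$, so that (i) still holds; here \emph{mass alone does not control cardinality}, which is what forces the integral bookkeeping. Carrying this out carefully is what yields the stated ``$\lesssim$'' in the $(\delta,s)$-condition. If one is content with the weaker ``$\lesssim_{\log}$'' version, there is a shortcut avoiding the whole allocation: Frostman's lemma provides a measure $\mu$ on $B$ with $\mu(B)\gtrsim\kappa$ and $\mu(B(x,r))\lesssim r^{s}$ for all $x,r$, and then selecting each dyadic $\delta$-cube $R$ independently with probability $\min\{1,\mu(R)\delta^{-s}\}$ and taking one point of $B$ from each selected cube produces, with positive probability, a $\delta$-separated set of cardinality $\gtrsim\kappa\delta^{-s}$ obeying $|P\cap B(x,r)|\lesssim_{\log}(r/\delta)^{s}$ --- the logarithm coming from a Chernoff union bound over the $\lesssim\delta^{-2}$ dyadic cubes that need to be controlled.
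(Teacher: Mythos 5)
Your reduction of the proposition to the existence of integer weights $t(Q)$ satisfying (i)--(iii) is correct, and the deduction of the $(\delta,s)$-property and of the cardinality bound from those weights is fine. But the construction of the weights is exactly where the proof is missing, and you say so yourself: the top-down proportional splitting forces you to round to integers and to prune children with $\calH^{s}_{\infty}(B\cap Q'_{l})<\delta^{s}$, and you do not show that the resulting losses stay below a constant fraction of $t(U)$. This is a genuine gap, not a technicality. The natural per-node estimate only bounds the loss by $O(1)$ units at each node carrying positive weight, and the number of such nodes, summed over the $\sim\log(1/\delta)$ dyadic generations, can be as large as $t(U)\log(1/\delta)$ (e.g.\ when the weight stays spread over $\sim t(U)$ cubes each carrying weight $1$ for many generations). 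So the naive accounting gives a total loss of order $t(U)\log(1/\delta)\gg t(U)$, and (i) is not established. A second, smaller issue of the same kind: rounding ``as close as possible to the ideal shares'' can push a child one unit above its cap $c_{l}\delta^{-s}$, breaking the auxiliary invariant you rely on for (iii); repairing this by capping and redistributing creates further losses that you would again have to control.

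For the record, the paper does not prove the proposition itself; it quotes Proposition A.1 of F\"assler--Orponen \cite{FO}, and the argument there runs in the opposite direction, which is precisely what avoids your obstacle. One starts with one point of $B$ in every dyadic $\delta$-cube meeting $B$ and then prunes from fine scales to coarse: at generation $j$ one deletes points from any dyadic cube $Q$ of side $2^{-j}$ containing more than $(2^{-j}/\delta)^{s}$ surviving points, until exactly $\lceil(2^{-j}/\delta)^{s}\rceil$ remain, calling such $Q$ saturated. The $(\delta,s)$-property with a genuine ``$\lesssim$'' is then automatic, and the cardinality bound follows because the maximal saturated cubes (together with the untouched $\delta$-cubes) form a disjoint dyadic cover of $B$, so $\sum_{Q}\ell(Q)^{s}\geq\calH^{s}_{\infty}(B)=\kappa$, while each such $Q$ contributes $\gtrsim(\ell(Q)/\delta)^{s}$ points to the final set; summing gives $|P|\gtrsim\kappa\delta^{-s}$ with no loss bookkeeping at all. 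Your Frostman/random-selection shortcut is a reasonable sketch of the weaker $\lesssim_{\log}$ statement (modulo measurability hypotheses needed to invoke Frostman's lemma), but it does not give the ``$\lesssim$'' version that the proposition asserts.
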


Now, as a warm-up for things to come, but also for real use, I present a quick proof of the easy bound $\Hd \{e : \Pd \pi_{e}(K)\} \leq s$. As with Theorem \ref{mainPacking}, it suffices to prove that $\Hd \{e : \Bd \pi_{e}(K) \leq s\}$, and this follows almost immediately from the next proposition:
\begin{proposition}\label{kaufmanProp} Fix $\delta > 0$. Let $0 < s < 1$ and let $K \subset B(0,1) \subset \R^{2}$ be a set with $\calH^{1}_{\infty}(K) \sim 1$. Assume that $E \subset S^{1}$ is a $\delta$-separated set with $|E| \gtrsim \delta^{-s}$. Then, there exists a vector $e \in E$ with $N(\pi_{e}(K),\delta) \gtrsim_{\log} \delta^{-s}$. 
\end{proposition}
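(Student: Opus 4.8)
The plan is to run a standard "pigeonhole + incidence counting" argument, the same one that underlies Kaufman's bound \eqref{kaufman}. First I would use Proposition~\ref{deltasSet} to extract a $(\delta,1)$-set $P \subset K$ with $|P| \gtrsim \delta^{-1}$; the $\calH^1_\infty(K) \sim 1$ hypothesis is exactly what makes this possible. So now I have a $\delta$-separated point set $P$ of size $\sim \delta^{-1}$ satisfying the $(\delta,1)$-ball condition, and a $\delta$-separated set $E \subset S^1$ of size $\gtrsim \delta^{-s}$, and I want to find a single direction $e \in E$ such that $\pi_e(P)$ (hence $\pi_e(K)$) occupies $\gtrsim_{\log} \delta^{-s}$ distinct $\delta$-intervals.

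The key step is to argue by contradiction: suppose that for \emph{every} $e \in E$ we have $N(\pi_e(P),\delta) \lesssim_{\log} \delta^{-s}$, i.e. the $\sim \delta^{-1}$ points of $P$ fall into only about $\delta^{-s}$ slabs of width $\delta$ perpendicular to $e$. Then for each $e$ there must be many "coincidences": by Cauchy--Schwarz (or just convexity of $x \mapsto \binom{x}{2}$), the number of pairs $(p,p') \in P \times P$ with $|\pi_e(p) - \pi_e(p')| \lesssim \delta$ is $\gtrsim_{\log}^{-1} |P|^2 / N(\pi_e(P),\delta) \gtrsim_{\log} \delta^{-2}/\delta^{-s} = \delta^{s-2}$. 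Summing this lower bound over all $e \in E$ gives
\begin{equation}\label{eq:kaufmanLB}
\sum_{e \in E} |\{(p,p') \in P \times P : |\pi_e(p-p')| \lesssim \delta\}| \gtrsim_{\log} |E| \cdot \delta^{s-2} \gtrsim_{\log} \delta^{-2s-2}.
\end{equation}
On the other hand I would estimate the same double sum by fixing the pair $(p,p')$ with $p \neq p'$, writing $|p - p'| =: r \in [\delta,2]$, and counting the directions $e \in E$ for which $\pi_e(p-p')$ is small: these $e$ lie in an arc of length $\lesssim \delta/r$ around the direction orthogonal to $p-p'$, and since $E$ is a $\delta$-separated subset of $S^1$ this arc contains $\lesssim \delta/r \cdot \delta^{-1} + 1 = 1/r + 1 \lesssim 1/r$ vectors of $E$ (using $r \geq \delta$). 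Now group the pairs by the dyadic scale of $r$: the $(\delta,1)$-set property of $P$ says that for each $p$, the number of $p' \in P$ with $|p-p'| \leq 2^{j}\delta$ is $\lesssim_{\log} 2^{j}$, hence the number of pairs at scale $r \sim 2^j \delta$ is $\lesssim_{\log} |P| \cdot 2^j = \delta^{-1} 2^j$, and each such pair is counted by $\lesssim 1/r \sim 2^{-j}\delta^{-1}$ directions. Thus the contribution of scale $j$ to the double sum is $\lesssim_{\log} \delta^{-1} 2^j \cdot 2^{-j}\delta^{-1} = \delta^{-2}$, and summing over the $\lesssim \log(1/\delta)$ dyadic scales gives an upper bound of $\lesssim_{\log} \delta^{-2}$ for the left-hand side of \eqref{eq:kaufmanLB}. (The diagonal $p = p'$ contributes only $|E|\cdot|P| \lesssim \delta^{-s-1} \leq \delta^{-2}$, which is harmless.)

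Comparing the two bounds yields $\delta^{-2s-2} \lesssim_{\log} \delta^{-2}$, i.e. $\delta^{-2s} \lesssim_{\log} 1$; since $s > 0$ this fails once $\delta$ is small, and the implied constants are independent of $\delta$, so we get a contradiction for all sufficiently small $\delta$ — and for the remaining finitely many scales the statement is trivial by adjusting constants. Hence some $e \in E$ must satisfy $N(\pi_e(K),\delta) \geq N(\pi_e(P),\delta) \gtrsim_{\log} \delta^{-s}$, as claimed. The only place demanding any care is the book-keeping of the $\lesssim_{\log}$ factors through the dyadic sum and the Cauchy--Schwarz step; I expect the main (minor) obstacle is checking that the number of logarithmic factors stays bounded by an absolute constant, which it does since there are only $O(\log(1/\delta))$ dyadic scales and each contributes a fixed power of $\log(1/\delta)$ from the $(\delta,1)$-set hypothesis.
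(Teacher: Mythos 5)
Your overall strategy --- extract a $(\delta,1)$-set $P \subset K$ via Proposition \ref{deltasSet}, then double-count the incidences $\{(e,p,p') : |\pi_{e}(p - p')| \lesssim \delta\}$, using Cauchy--Schwarz for the lower bound and the arc-length plus dyadic-decomposition estimate for the upper bound --- is exactly the paper's. However, there is an arithmetic slip that manufactures a fake contradiction, and once it is corrected your argument no longer closes as written. Summing the per-direction lower bound $\delta^{s-2}$ over the $\gtrsim \delta^{-s}$ directions of $E$ gives $\delta^{-s}\cdot\delta^{s-2} = \delta^{-2}$, not $\delta^{-2s-2}$. Since your upper bound for the same quantity is $\lesssim_{\log}\delta^{-2}$, the two sides agree up to logarithmic factors, and the step ``$\delta^{-2s}\lesssim_{\log}1$, which fails for small $\delta$'' evaporates. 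This is not cosmetic: the proposition sits exactly at the critical exponent --- which is precisely why its conclusion is only $\gtrsim_{\log}\delta^{-s}$ rather than $\gtrsim\delta^{-s}$ --- so no contradiction can ever be extracted from powers of $\delta$ alone.

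The paper closes the argument by a sharper bookkeeping of exactly the logarithms you defer to the end. Instead of assuming every $e\in E$ is bad, it sets the badness threshold at $M = c\delta^{-s}/\log^{C}(1/\delta)$ and bounds the \emph{cardinality} of the exceptional set $E_{0} = \{e\in E : N(\pi_{e}(P),\delta)\leq M\}$: the double count gives $|E_{0}|\cdot\delta^{-2}/M \lesssim_{\log}\delta^{-2}$, hence $|E_{0}|\lesssim_{\log}M$, and choosing $C$ larger than the absolute power of $\log(1/\delta)$ hidden in ``$\lesssim_{\log}$'' (and $c$ small) forces $|E_{0}| < |E|$, so some $e\in E\setminus E_{0}$ satisfies the conclusion. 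Your parenthetical remark that tracking the logarithmic factors is a ``minor obstacle'' is therefore backwards: the entire content of the proposition lives in those logarithms, and the proof needs to be restructured along these lines --- bounding the size of the bad set against a carefully calibrated threshold rather than deriving a power-of-$\delta$ contradiction --- to be correct.
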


\begin{proof} By Proposition \ref{deltasSet}, there exists a $(\delta,1)$-set $P \subset K$ with $|P| \sim \delta^{-1}$. It suffices to find $e \in E$ such that $N(\pi_{e}(P),\delta) \gtrsim_{\log} \delta^{-s}$. Let $E_{0} \subset E$ be the set of vectors $e \in E$ such that the claim fails: more precisely, $N(\pi_{e}(P),\delta) \leq M$ for $M = c\delta^{-t}/\log^{C}(1/\delta)$, where $c,C > 0$ are suitable constants. It suffices to show that $|E_{0}| < |E|$, if $c > 0$ is small enough. Fix $e \in E_{0}$. Then, it is easy to check using Cauchy-Schwarz (or see the proof of Proposition 4.10 in \cite{O2}) that there exist $\gtrsim |P|^{2}/M \gtrsim \delta^{-2}/M$ pairs $(p_{1},p_{2}) \in P \times P$ such that
\begin{displaymath} |\pi_{e}(p_{1}) - \pi_{e}(p_{2})| \leq \delta. \end{displaymath}
Since the lower bound $|P|^{2}/M$ is far greater than $|P|$ for small enough $\delta > 0$, one in fact has the same lower bound for pairs $(p_{1},p_{2})$ satisfying additionally $p_{1} \neq p_{2}$. Consequently,
\begin{displaymath} \sum_{e \in E_{0}} |\{(p_{1},p_{2}) \in P \times P : p_{1} \neq p_{2} \text{ and } |\pi_{e}(p_{1}) - \pi_{2}(p_{2})| \leq \delta\}| \gtrsim \frac{|E_{0}|}{\delta^{2}M}. \end{displaymath}
On the other hand, using the geometric fact that $\{e \in S^{1} : |\pi_{e}(p_{1}) - \pi_{e}(p_{2})| \leq \delta\}$ is the union of two arcs of length $\lesssim \delta/|p_{1} - p_{2}|$, one has
\begin{align*} \sum_{e \in E} & |\{(p_{1},p_{2}) \in P \times P : p_{1} \neq p_{2} \text{ and } |\pi_{e}(p_{1}) - \pi_{e}(p_{2})| \leq \delta\}|\\
& = \sum_{p_{1} \neq p_{2}} |\{e \in E : |\pi_{e}(p_{1}) - \pi_{e}(p_{2})| \leq \delta\}|\\
& \lesssim \sum_{p_{1} \neq p_{2}} \frac{1}{|p_{1} - p_{2}|} \lesssim \sum_{p_{1}} \sum_{\delta \leq 2^{j} \leq 1} 2^{-j} |P \cap B(p_{1},2^{j})|\\
& \lesssim_{\log} \sum_{p_{1}} \sum_{\delta \leq 2^{j} \leq 1} 2^{-j} \cdot \frac{2^{j}}{\delta} \lesssim_{\log} \delta^{-2}. \end{align*}
Comparing the lower and upper bounds leads to
\begin{displaymath} |E_{0}| \lesssim_{\log} M = \frac{c\delta^{-t}}{\log^{C}(1/\delta)}. \end{displaymath}
For $c > 0$ sufficiently small and $C \geq 1$ sufficiently large, this gives $|E_{0}| < |E|$, and the proof is complete.
\end{proof}

\begin{cor} If $\calH^{1}(K) > 0$, then $\Hd \{e \in S^{1} : \Bd \pi_{e}(K) \leq s\} \leq s$. \end{cor}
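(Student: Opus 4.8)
The plan is to argue by contradiction and feed an exceptional set of directions into Proposition \ref{kaufmanProp}. First I would move into the setting of that proposition: since $\calH^{1}(K) > 0$, there is a compact subset of $K$ with positive $\calH^{1}_{\infty}$-measure, and after translating and rescaling — operations that change neither $\Bd \pi_{e}(K)$ nor the set $\{e : \Bd \pi_{e}(K) \leq s\}$, and only shrink the latter — we may assume $K \subset B(0,1)$ is compact with $\calH^{1}_{\infty}(K) \sim 1$. All implied constants below are permitted to depend on the now-fixed positive quantities that arise (e.g.\ $\calH^{1}_{\infty}(K)$).

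Write $F := \{e \in S^{1} : \Bd \pi_{e}(K) \leq s\}$ and suppose, for contradiction, that $\Hd F > s$. Fix $s'$ with $s < s' < \min\{\Hd F, 1\}$ and put $\eta := (s' - s)/2 > 0$. For each $e \in F$, the inequality $\Bd \pi_{e}(K) \leq s < s + \eta$ yields, by the definition of $\Bd$, a threshold $\delta_{0}(e) > 0$ with $N(\pi_{e}(K),\delta) \leq \delta^{-s-\eta}$ for all $\delta \leq \delta_{0}(e)$; hence $F = \bigcup_{n} F_{n}$ with
\[ F_{n} := \{e \in S^{1} : N(\pi_{e}(K),\delta) \leq \delta^{-s-\eta} \text{ for all } 0 < \delta \leq 1/n\}. \]
Since $\Hd F > s'$ we have $\calH^{s'}(F) = \infty$, so by countable subadditivity there is $n_{0} \in \N$ with $\calH^{s'}_{\infty}(F_{n_{0}}) > 0$.

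Now I would fix a single small scale $\delta \leq 1/n_{0}$, to be pinned down at the very end. Applying Proposition \ref{deltasSet} to $F_{n_{0}} \subset \R^{2}$ produces a $(\delta,s')$-set $P \subset F_{n_{0}}$ with $|P| \gtrsim \delta^{-s'}$, the implied constant depending on $\calH^{s'}_{\infty}(F_{n_{0}}) > 0$; in particular $P$ is $\delta$-separated and, once $\delta$ is small enough, $|P| \gtrsim \delta^{-s'} \geq \delta^{-s}$. Proposition \ref{kaufmanProp} — whose proof is written for, and goes through verbatim with, an arbitrary exponent in $(0,1)$, as the free parameter "$t$" appearing in the argument makes plain — then supplies, when applied with exponent $s'$, a direction $e \in P$ with $N(\pi_{e}(K),\delta) \gtrsim_{\log} \delta^{-s'}$. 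But $e \in P \subset F_{n_{0}}$ and $\delta \leq 1/n_{0}$, so simultaneously $N(\pi_{e}(K),\delta) \leq \delta^{-s-\eta} = \delta^{-(s+s')/2}$. Comparing the two estimates and rearranging gives $c\,\delta^{(s-s')/2} \leq \log^{C}(1/\delta)$ for constants $c > 0$, $C \geq 1$; since $(s-s')/2 < 0$ the left-hand side grows polynomially in $1/\delta$ while the right-hand side grows only polylogarithmically, so the inequality fails once $\delta$ is small enough. This contradiction forces $\Hd F \leq s$.

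The two points that need (mild) care are the pigeonholing into the sets $F_{n}$ — forced on us because the $\limsup$ defining $\Bd$ only supplies a threshold $\delta_{0}(e)$ depending on the direction, not a uniform one — and the fact that Proposition \ref{kaufmanProp} must be invoked at the exponent $s' > s$ rather than at $s$ itself: at exponent $s$ the conclusion $N(\pi_{e}(K),\delta) \gtrsim_{\log} \delta^{-s}$ is perfectly compatible with $\Bd \pi_{e}(K) \leq s$, so no contradiction would ensue. Neither point is a genuine obstacle, and the reduction to compact $K \subset B(0,1)$ with $\calH^{1}_{\infty}(K) \sim 1$ is entirely routine; this is why the corollary "follows almost immediately" from Proposition \ref{kaufmanProp}.
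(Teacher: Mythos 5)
Your argument is correct and follows essentially the same route as the paper: assume the dimension bound fails, pigeonhole the exceptional set into the pieces $F_{n}$ where the covering bound $N(\pi_{e}(K),\delta)\leq\delta^{-s-\eta}$ holds uniformly for $\delta\leq 1/n$, extract a $\delta$-separated set of cardinality $\gtrsim\delta^{-s'}$ from a piece of positive $\calH^{s'}$-measure, and play Proposition \ref{kaufmanProp} at the exponent $s'>s$ against the membership condition to reach a contradiction for small $\delta$. The only differences from the paper's proof are notational (your $s'$, $\eta$ versus the paper's $s+\epsilon$, $s+2\epsilon$) and that you spell out the routine reductions the paper leaves implicit.
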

\begin{proof} If the statement fails, then $\calH^{s + 2\epsilon}(\{e : \Bd \pi_{e}(K) \leq s\}) > 0$ for some $\epsilon > 0$. By definition of $E_{s}(K)$, this implies that the set
\begin{displaymath} E_{i} := \{e : N(\pi_{e}(K),\delta) \leq \delta^{-s - \epsilon} \text{ for all } \delta \leq 1/i\} \end{displaymath}
has positive $(s + 2\epsilon)$-dimensional measure for some $i \in \N$. In particular, $E_{i}$ contains a $\delta$-separated set of cardinality $\gtrsim \delta^{-s - 2\epsilon}$ for all $\delta \leq 1/i$. For small enough $\delta > 0$, this violates Proposition \ref{kaufmanProp}. \end{proof}

\begin{remark}\label{boxDimE} Proposition \ref{kaufmanProp} is also crucial for the proof of the main theorem. Recall the set $E$ in the main counter assumption \eqref{counterAss}. Then
\begin{equation}\label{boxE} N(E,\delta) \leq \delta^{-s - 2\epsilon_{0}} \end{equation}
for small enough $\delta \leq \delta_{0}$. Indeed, in the opposite case Proposition \ref{kaufmanProp} would imply that $N(\pi_{e}(K),\delta) \gtrsim_{\log} \delta^{-s - 2\epsilon_{0}}$ for some $e \in E$, violating the definition of $E$ for small enough $\delta > 0$. For simplicity and without loss of generality, I will assume that \eqref{boxE} holds for all $0 < \delta \leq \delta_{0}$. \end{remark}

\section{Product-like sets}\label{productSets}

The main result of this section is a technical statement, Proposition \ref{productProp}, about "product-like" sets, which will be useful later on in the context of general sets. A simple qualitative corollary of Proposition \ref{productProp} would state the following. Assume that $A \subset \R$ is $s$-dimensional and $B \subset \R$ is $\tau$-dimensional, $\tau > 0$. Then, for any $s$-dimensional set $E \subset S^{1}$, there exists $e \in E$ such that 
\begin{displaymath} \Bd \pi_{e}(A \times B) \geq s + \epsilon, \end{displaymath}
where $\epsilon > 0$ only depends on $s$ and $\tau$. Here is the quantitative version:

\begin{proposition}\label{productProp} Let $0 < s < 1$, $\epsilon,\tau > 0$. Let $B \subset [0,1]$ be a $(\delta,\tau)$-set of cardinality $|B| \gtrsim \delta^{-\tau + \epsilon}$, and let $E \subset S^{1}$ be a $(\delta,s)$-set of cardinality $|E| \gtrsim \delta^{-s + \epsilon}$. For each $b \in B$, assume that $A_{b} \subset [0,1]$ is a $(\delta,s)$-set of cardinality $|A_{b}| \gtrsim\delta^{-s + \epsilon}$, and let $P$ be the $(\delta,s + \tau)$-set
\begin{equation}\label{P} P := \bigcup_{b \in B} A_{b} \times \{b\}. \end{equation}
Then, if $\epsilon$ is small enough (depending only on $\tau,s$), then
\begin{equation}\label{form23} N(\pi_{e}(P),\delta) \geq \delta^{-s - \epsilon} \quad \text{ for some } e \in E, \end{equation}
for all sufficiently small $\delta > 0$ (depending only on $s,\tau$, and the implicit constants behind the $\sim$ notation). 
\end{proposition}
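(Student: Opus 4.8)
The strategy I would pursue is to argue by contradiction: suppose that $N(\pi_e(P),\delta) < \delta^{-s-\epsilon}$ for \emph{every} $e \in E$. The philosophy is that a projection $\pi_e$ with small image forces many additive coincidences among the points of $P$, and we want to say that $P$ — being a product set $\bigcup_b A_b\times\{b\}$ — is "too spread out" in the $A_b$-directions to allow this for a whole $(\delta,s)$-set worth of directions $e$. Concretely, for each $e \in E$ I would run the Cauchy--Schwarz/energy argument from the proof of Proposition~\ref{kaufmanProp}: if $N(\pi_e(P),\delta) \leq M_e$ with $M_e < \delta^{-s-\epsilon}$, then $P$ has $\gtrsim |P|^2/M_e \gtrsim \delta^{-2(s+\tau)}/M_e$ near-coincident pairs $(p_1,p_2)$ with $|\pi_e(p_1)-\pi_e(p_2)| \leq \delta$. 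Summing over all $e \in E$ gives a lower bound $\gtrsim |E|\,\delta^{-2(s+\tau)+s+\epsilon}$ for the total number of triples $(e,p_1,p_2)$ with this property.

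\medskip

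The point of contact with additive combinatorics comes from analysing this quantity from the other side. Writing $p_i = (a_i,b_i)$ with $b_i \in B$, $a_i \in A_{b_i}$, the condition $|\pi_e(p_1)-\pi_e(p_2)| \leq \delta$ reads $|\cos\theta\,(a_1-a_2) + \sin\theta\,(b_1-b_2)| \leq \delta$ if $e = (\cos\theta,\sin\theta)$. For a fixed pair $(b_1,b_2)$ with $|b_1-b_2| \gtrsim \delta$, the set of $e$ satisfying this for \emph{some} $a_1 \in A_{b_1}$, $a_2 \in A_{b_2}$ is a union of $\lesssim N(A_{b_1}-A_{b_2},\delta)$ arcs each of length $\sim \delta/|b_1-b_2|$, and intersecting with the $(\delta,s)$-set $E$ gives a bound of roughly $N(A_{b_1}-A_{b_2},\delta) \cdot (|b_1-b_2|/\delta)^{-s}$ (up to $\lesssim_{\log}$ and the $(\delta,s)$-set constant) on the number of admissible $e$, times $|A_{b_1}|\cdot$(typical fibre size) to count the $a$'s. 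The crucial asymmetry is that $|A_b|$ is only $\gtrsim \delta^{-s+\epsilon}$ but, being a $(\delta,s)$-set, its difference set cannot be too small: one expects $N(A_{b_1}-A_{b_2},\delta) \gtrsim \delta^{-s}$ type behaviour, or at least enough of it after a suitable refinement (pigeonholing $B$ into a subset where the $A_b$ behave uniformly, and invoking a discretised sum-set / Plünnecke--Ruzsa or Bourgain-type projection estimate to rule out the $A_b$ all being nearly-arithmetic-progression-like). Comparing the two-sided bounds, the counter assumption $M_e < \delta^{-s-\epsilon}$ would force $|A_b| \lessapprox \delta^{-s+O(\epsilon)}$ to conspire with $|E| \gtrsim \delta^{-s+\epsilon}$ in a way that is incompatible once $\epsilon$ is small — the gain over Kaufman comes precisely from the $\tau$-dimensional "thickness" of $B$ giving us extra pairs $(b_1,b_2)$ to play with at a range of scales $|b_1-b_2| \sim 2^j$.

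\medskip

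In carrying this out I would (i) first normalise and pigeonhole: restrict to a single dyadic scale range for $|b_1-b_2|$ carrying a positive proportion of the pair-mass, and refine $B$ and the $A_b$ so that $|A_b| \sim \delta^{-s+\epsilon}$ uniformly and each $A_b$ is a genuine $(\delta,s)$-set; (ii) set up the double count of $(e,p_1,p_2)$ as above, getting the lower bound from the energy/Cauchy--Schwarz step applied to the counter assumption and the upper bound from the arc-counting plus the $(\delta,s)$-property of $E$; (iii) insert the additive-combinatorial input bounding $N(A_{b_1}-A_{b_2},\delta)$ (or an $L^2$ substitute, the additive energy of $A_b$) from below — this is where a Bourgain-style discretised sum-product or Katz--Tao-type inequality enters; (iv) combine to reach $A_1 \gtrsim \delta^{-\epsilon_1} A_m$ and conclude. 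The main obstacle I anticipate is step (iii): controlling the difference sets $A_{b_1}-A_{b_2}$ \emph{uniformly} across the pigeonholed pairs $(b_1,b_2)$, since a priori different fibres $A_b$ could be adversarially correlated (e.g. all translates of one another, or all concentrated near a common arithmetic progression), which would kill the difference-set gain. Handling this likely requires a further pigeonholing — fixing a "model" fibre structure — and then a quantitative projection theorem in the plane (Bourgain's, or the Orponen--Shmerkin-type refinements) applied to the $2$-dimensional set $P$ itself rather than to the individual fibres, trading the product structure for a genuine $(\delta,s+\tau)$-set hypothesis and exploiting $s+\tau > s$.
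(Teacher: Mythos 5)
Your overall philosophy (contradiction, double counting against the $(\delta,s)$-set property of $E$, additive combinatorics, Bourgain's projection theorem) is the right toolbox, but the mechanism proposed in your steps (ii)--(iii) does not close. The double count of triples $(e,p_{1},p_{2})$ with $|\pi_{e}(p_{1})-\pi_{e}(p_{2})|\leq\delta$ gives a lower bound $\gtrapprox |E|\cdot|P|^{2}\delta^{s}\approx\delta^{-2s-2\tau}$ and an upper bound $\sum_{p_{1}\neq p_{2}}|p_{1}-p_{2}|^{-s}\lesssim_{\log}|P|^{2}\approx\delta^{-2s-2\tau}$ (the last step using that $P$ is a $(\delta,s+\tau)$-set); these match up to $\delta^{O(\epsilon)}$ factors, so no contradiction is available from the count alone --- it is exactly the computation behind Kaufman's bound, and in the actual proof it is used only to conclude that \emph{almost every} pair $(p,q)\in P\times P$ lies in a common tube of the covering (this is \eqref{form3}), which is the starting point rather than the conclusion. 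Likewise, the additive input you hope for in step (iii) is not there: for $s\geq 1/2$ an arithmetic progression of length $\delta^{-s}$ and gap $\delta^{1-s}$ is a legitimate $(\delta,s)$-set whose difference set has size $\approx\delta^{-s}$, so a lower bound of the form $N(A_{b_{1}}-A_{b_{2}},\delta)\gtrsim\delta^{-s}$ carries no gain over the trivial count, and no statement about difference sets of individual fibres can rescue the balance.

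The missing idea is a bootstrap to a $2s$-dimensional planar set. From the fact that almost all pairs are related, one counts \emph{triples} $(b_{1},b_{2},b_{3})\in B^{3}$ of well-separated fibres and finds, for $\approx|B|^{3}$ of them, a subset $G_{b_{1},b_{2},b_{3}}'\subset A_{b_{1}}\times A_{b_{3}}$ of near-full cardinality $\approx\delta^{-2s}$ whose image under the map $\pi_{b_{1},b_{2},b_{3}}(x,y)=x+\tfrac{b_{2}-b_{1}}{b_{3}-b_{2}}y$ lands $\delta$-close to a dilate of $A_{b_{2}}$, hence covers only $\lessapprox\delta^{-s}$ intervals of length $\delta$. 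Balog--Szemer\'edi--Gowers upgrades ``many pairs with small sum'' to a genuine product $D^{1}\times D^{2}$ with $N(D^{1}+cD^{2},\delta)\lessapprox\delta^{-s}$, Pl\"unnecke--Ruzsa then gives $N(D^{2}+D^{2},\delta)\lessapprox\delta^{-s}$, and --- fixing $b_{2},b_{3}$ and letting $b_{1}=b$ range over a positive proportion $B_{0}$ of $B$ --- one obtains a \emph{single} generalised $(\delta,2s)$-set $cD^{2}\times D^{2}$ all of whose projections in the $\tau$-dimensional direction set $\{c_{b}:b\in B_{0}\}$ have measure $\lessapprox\delta^{1-s}$. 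Since $2s<2$, Bourgain's discretised projection theorem forbids exactly this. Note that applying Bourgain's theorem directly to $P$, as you suggest at the end, cannot work: $P$ is only $(s+\tau)$-dimensional, so Bourgain guarantees projections of dimension only about $(s+\tau)/2<s$ when $\tau$ is small; the entire purpose of the BSG/Pl\"unnecke--Ruzsa stage is to manufacture a $2s$-dimensional set, for which Bourgain's bound $s+\epsilon$ finally beats the counter assumption.
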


\begin{remark} In this section, Section \ref{productSets}, the constant $\epsilon_{0}$ from the main counter assumption \eqref{counterAss} does not make an appearance. So, it will cause no confusion, if the notations $\lessapprox$, $\gtrapprox$ and $\approx$ are temporarily re-purposed for the needs of Proposition \ref{productProp}. In particular, the failure of \eqref{form23} will be denoted by $N(\pi_{e}(P),\delta) \lessapprox \delta^{-s}$, as in \eqref{form1} below. Similarly, the cardinality of $B$ is $|B| \approx \delta^{-\tau}$ and so on. 
\end{remark}

Before starting the proof, I recall two standard results from additive combinatorics. The first is the \emph{Balog-Szemer\'edi-Gowers theorem}. The statement below is taken verbatim from p. 196 in \cite{Bo}. For a proof, see \cite{TV}, p. 267.
\begin{thm}[Balog-Szemer\'edi-Gowers]\label{BSG} There exists an absolute constant $C \geq 1$ such that the following holds. Let $A,B \subset \R$ be finite sets, and assume that $G \subset A \times B$ is a set of pairs such that
\begin{displaymath} |G| \geq \frac{|A||B|}{K} \quad \text{and} \quad |\{x + y : (x,y) \in G\}| \leq K|A|^{1/2}|B|^{1/2} \end{displaymath}
for some $K > 1$. Then, there exist $A' \subset A$ and $B' \subset B$ satisfying
\begin{itemize}
\item $|A'| \geq K^{-C}|A|$, $|B'| \geq K^{-C}|B|$,
\item $|A' + B'| \leq K^{C}|A|^{1/2}|B|^{1/2}$, and
\item $|G \cap (A' \times B')| \geq K^{-C}|A||B|$. 
\end{itemize}
\end{thm}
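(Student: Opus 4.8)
The proof separates into a purely graph-theoretic statement and an additive deduction; I sketch both. View $G$ as a bipartite graph with parts $A$ and $B$, write $\rho := |G|/(|A||B|) \geq 1/K$ for its density, and for $b \in B$, $a \in A$ let $\Gamma(b) := \{a' \in A : (a',b) \in G\}$, $\Gamma(a) := \{b' \in B : (a,b') \in G\}$, and $\mathrm{codeg}(a,a') := |\Gamma(a) \cap \Gamma(a')|$. Put $S := \{x+y : (x,y) \in G\}$, so $|S| \leq K|A|^{1/2}|B|^{1/2}$ by hypothesis. The heart of the matter is the following \emph{graph lemma}: there are $A' \subseteq A$ and $B' \subseteq B$ with $|A'| \gtrsim \rho|A|$, $|B'| \gtrsim \rho|B|$ and $|G \cap (A' \times B')| \gtrsim \rho^{2}|A||B|$, such that every pair $(a,b) \in A' \times B'$ is joined by $\gtrsim \rho^{7}|A||B|$ \emph{paths of length three}, i.e.\ triples $(b_{1},a_{1})$ with $(a,b_{1}),(a_{1},b_{1}),(a_{1},b) \in G$. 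Granting this, the rest is quick; so the plan is to prove the graph lemma first, then carry out the additive step.

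For the graph lemma I would use \emph{dependent random choice}. Delete from $A$ every vertex of degree $< \rho|B|/2$; this keeps $\geq \rho|A||B|/2$ edges, so I may assume $\deg(a) \geq \rho|B|/2$ for all $a \in A$ while $|G| \gtrsim \rho|A||B|$ still. Pick $b^{\ast} \in B$ uniformly at random and set $W := \Gamma(b^{\ast}) \subseteq A$. Then $\mathbb{E}|W| \geq \rho|A|/2$, and since $\Pr(\{a,a'\} \subseteq W) = \mathrm{codeg}(a,a')/|B|$, Cauchy--Schwarz gives $\mathbb{E}|W|^{2} = |B|^{-1}\sum_{b}\deg(b)^{2} \gtrsim \rho^{2}|A|^{2}$; meanwhile, calling a pair $(a,a')$ \emph{light} if $\mathrm{codeg}(a,a') < \theta$ with $\theta := c\rho^{5}|B|$, the expected number of light pairs inside $W \times W$ is $< \theta|A|^{2}/|B| = c\rho^{5}|A|^{2}$. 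A union bound (with $c$ small) then fixes a choice of $b^{\ast}$ with $|W| \gtrsim \rho|A|$ and at most $O(\rho^{2}|W|^{2})$ light pairs in $W \times W$. Pruning $W$ down to the set $A'$ of vertices having at most an $O(\rho)$-fraction of light partners in $W$ keeps $|A'| \geq (1 - O(\rho))|W| \gtrsim \rho|A|$ and ensures $\mathrm{codeg}(a,a_{1}) \geq \theta$ for every $a \in A'$ and all but an $O(\rho)$-fraction of $a_{1} \in W$. Finally put $B' := \{b \in B : |\Gamma(b) \cap W| \geq c'\rho|W|\}$; since $\sum_{b}|\Gamma(b) \cap W| = \sum_{a \in W}\deg(a) \geq \tfrac12\rho|W||B|$ one gets $|B'| \gtrsim \rho|B|$, and — with the constants arranged so $|W \setminus A'|$ is negligible against $c'\rho|W|$ — also $|G \cap (A' \times B')| \geq \sum_{b \in B'}|\Gamma(b) \cap A'| \gtrsim \rho^{2}|A||B|$. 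The path count is then immediate: for $(a,b) \in A' \times B'$, each $a_{1} \in \Gamma(b) \cap W$ with $\mathrm{codeg}(a,a_{1}) \geq \theta$ contributes $\geq \theta$ length-three paths, and there are $\geq |\Gamma(b) \cap W| - O(\rho|W|) \gtrsim \rho|W|$ such $a_{1}$, so the number of length-three paths is $\gtrsim \theta\rho|W| \gtrsim \rho^{7}|A||B|$.

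For the additive step I use the Ruzsa/Katz--Tao telescoping identity. Identify $a \in A$, $b \in B$ with their values $x_{a}, y_{b} \in \R$. A length-three path from $a$ to $b$ through $b_{1}$ (value $u$) and $a_{1}$ (value $v$) yields $x_{a} + y_{b} = (x_{a}+u) - (v+u) + (v+y_{b}) =: s_{1} - s_{2} + s_{3}$ with $s_{1},s_{2},s_{3} \in S$. Given the endpoint values $x_{a}, y_{b}$, the path is recovered from $(s_{1},s_{2})$ alone ($u = s_{1} - x_{a}$, $v = s_{2} - u$, distinct reals having distinct preimages), so distinct paths give distinct pairs $(s_{1},s_{2}) \in S \times S$; and $z := x_{a} + y_{b}$ always satisfies $z = s_{1} - s_{2} + s_{3}$, so each $(s_{1},s_{2}) \in S \times S$ is compatible with at most $|S|$ values $z \in A'+B'$. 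Fixing one representative pair for each $z \in A'+B'$ and summing the lower bound $m \gtrsim \rho^{7}|A||B|$ on the number of paths over all $z \in A'+B'$, one gets $m\,|A'+B'| \leq |S|^{3}$, hence $|A'+B'| \leq |S|^{3}/m \lesssim K^{3}\rho^{-7}(|A||B|)^{1/2} \leq K^{O(1)}|A|^{1/2}|B|^{1/2}$. Together with $|A'|,|B'| \gtrsim K^{-1}|A|, K^{-1}|B|$ and $|G \cap (A' \times B')| \gtrsim K^{-2}|A||B|$ from the graph lemma, this yields all three conclusions with $C$ the absolute polynomial exponent accumulated above.

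The genuine obstacle is the bookkeeping in the graph lemma: the thresholds ($\theta$, the light-fraction cutoffs defining $A'$, and the constant $c'$ defining $B'$) must be tuned so that \emph{simultaneously} $A'$ and $B'$ keep size $\gtrsim K^{-O(1)}$ of $A,B$, the rectangle $A' \times B'$ still meets $G$ in a $\gtrsim K^{-O(1)}$-fraction, and every pair in $A' \times B'$ supports $\gtrsim K^{-O(1)}|A||B|$ length-three paths. The delicate point forcing this particular scheme is that for each $a \in A'$ the set of large-codegree partners must miss $\Gamma(b) \cap W$ only on a negligible portion, which is why the light-pair fraction has to be driven down to $O(\rho^{2})$ rather than a mere small constant. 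Everything else — the second-moment and Markov estimates in the random choice, and the injectivity/counting in the additive step — is routine.
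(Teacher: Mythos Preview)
The paper does not prove the Balog--Szemer\'edi--Gowers theorem at all: it simply quotes the statement from \cite{Bo} and sends the reader to Tao--Vu \cite{TV}, p.~267, for a proof. So there is no argument in the paper to compare yours against.

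That said, your sketch is the standard Gowers proof --- dependent random choice to obtain the graph lemma (a dense bipartite piece $A'\times B'$ in which every pair is joined by $\gtrsim \rho^{O(1)}|A||B|$ paths of length three), followed by the telescoping identity $x_a+y_b=s_1-s_2+s_3\in S-S+S$ and the injectivity/counting argument bounding $|A'+B'|\le |S|^3/m$ --- and it is correct in outline; this is essentially what one finds in \cite{TV}. The only point needing care is the one you already flag: the tuning of the three thresholds. As literally written, pruning $A'$ at an ``$O(\rho)$-fraction'' of light partners while also requiring $c'<1/2$ (needed for $|B'|\gtrsim\rho|B|$) and $c'\rho|W|>\eta_1|W|$ (needed for the path count) puts the constants in mild tension. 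The remedy is already in your hands: with $\theta=c\rho^5|B|$ and $|W|\gtrsim\rho|A|$ one actually gets $L\lesssim c\rho^3|W|^2$ (not merely $\rho^2|W|^2$), so one may prune at an $O(\rho^2)$-fraction, after which all three requirements are compatible. Since the statement only asks for $K^{O(1)}$ losses, any polynomial-in-$\rho$ exponent suffices, and the additive step is clean as written.
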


The second auxiliary result is the Pl\"unnecke-Ruzsa inequality, whose proof can also be found in \cite{TV}:
\begin{thm}[Pl\"unnecke-Ruzsa]\label{PR} Assume that $A,B \subset \R$ are finite sets such that
\begin{displaymath} |A + B| \leq C|A| \end{displaymath}
for some integer $C \geq 1$. Then
\begin{displaymath} |B^{m} \pm B^{n}| \leq C^{m + n}|A| \end{displaymath}
for all $m,n \in \N$.
\end{thm}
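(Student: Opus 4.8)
The plan is to give the short, purely combinatorial proof due to Petridis, which bypasses the graph-theoretic machinery (Menger's theorem and commutative layered graphs) of Plünnecke's original argument. Throughout, for a finite set $D \subset \R$ and $m \in \N$ I would write $D^{m} = \underbrace{D + \cdots + D}_{m \text{ times}}$ (consistent with the notation in the statement), with the convention $D^{0} := \{0\}$. Two ingredients are needed: the Ruzsa triangle inequality, and Petridis's covering lemma.

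First I would record the Ruzsa triangle inequality in the form
\begin{displaymath} |U| \cdot |V - W| \leq |U + V| \cdot |U + W|, \end{displaymath}
valid for all finite $U, V, W \subset \R$. This is elementary: for each $d \in V - W$ fix a representation $d = v_{d} - w_{d}$ with $v_{d} \in V$ and $w_{d} \in W$, and note that the map $(u,d) \mapsto (u + v_{d}, u + w_{d})$ from $U \times (V - W)$ into $(U + V) \times (U + W)$ is injective, since the difference of the two coordinates recovers $d$, and then the first coordinate recovers $u$.

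The heart of the matter is Petridis's lemma. I would choose a nonempty $X \subseteq A$ minimising the ratio $K := |X + B|/|X|$ over all nonempty subsets of $A$; then $K \leq |A+B|/|A| \leq C$, and the claim is that $|X + B + Z| \leq K \, |X + Z|$ for every finite $Z \subset \R$. This I would prove by induction on $|Z|$: the case $|Z| = 1$ is the defining equality $|X + B| = K|X|$, and in the inductive step, writing $Z = Z' \cup \{z\}$ with $z \notin Z'$, one decomposes $X + B + Z = (X+B+Z') \cup (X+B+z)$ and introduces the auxiliary set $W := \{x \in X : x + z + B \subseteq X + B + Z'\}$. Two containments then do the work: $W + z + B \subseteq (X+B+Z') \cap (X+B+z)$, which gives $|(X+B+Z') \cap (X+B+z)| \geq |W + B| \geq K|W|$ by minimality of $K$; and $\{x \in X : x + z \in X + Z'\} \subseteq W$, which gives $|(X+Z') \cap (X+z)| \leq |W|$. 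Substituting these into the inclusion–exclusion identities for $|X+B+Z|$ and $|X+Z|$, together with the inductive bound $|X+B+Z'| \leq K|X+Z'|$, closes the induction. I expect this bookkeeping — guessing the correct $W$ and verifying the two containments — to be the only genuinely delicate point in the whole argument.

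Finally, iterating Petridis's lemma with $Z = B^{n}$ gives $|X + B^{n+1}| \leq K|X + B^{n}|$, hence $|X + B^{n}| \leq K^{n}|X|$ for all $n \in \N$. Since a translate of $B^{m+n}$ by any element of $X$ lies inside $X + B^{m+n}$, the sum case follows: $|B^{m} + B^{n}| = |B^{m+n}| \leq |X + B^{m+n}| \leq K^{m+n}|X| \leq C^{m+n}|A|$. For the difference case, the Ruzsa triangle inequality with $U = X$, $V = B^{m}$, $W = B^{n}$ yields
\begin{displaymath} |X| \cdot |B^{m} - B^{n}| \leq |X + B^{m}| \cdot |X + B^{n}| \leq K^{m}|X| \cdot K^{n}|X|, \end{displaymath}
so that $|B^{m} - B^{n}| \leq K^{m+n}|X| \leq C^{m+n}|A|$. (The integrality of $C$ plays no role: the argument works verbatim for any real $C \geq 1$.)
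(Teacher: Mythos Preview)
Your argument is correct: the Petridis lemma is stated and proved accurately (including the only delicate point, the choice of $W$ and the two containments), the iteration to $|X + B^{n}| \leq K^{n}|X|$ is clean, and the Ruzsa triangle inequality handles the difference case. Note, however, that the paper does not actually supply its own proof of this theorem; it merely quotes the statement and refers the reader to Tao--Vu \cite{TV}. So there is no ``paper's proof'' to compare against in detail. For what it is worth, the classical route (and the one in older editions of \cite{TV}) goes through Pl\"unnecke's graph-theoretic argument via Menger-type reasoning on layered commutative graphs; your Petridis-style proof is more elementary and self-contained, which is a genuine advantage for a reader who only needs this one inequality.
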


\begin{remark} Theorem \ref{PR} will be applied in the following form: if $A,B \subset \R$ are $\delta$-separated sets with $|A| \approx |B|$ and
\begin{displaymath} N(A + B,\delta) \lessapprox |A|, \end{displaymath}
then $N(B + B,\delta) \lessapprox |A|$. This statement follows easily from Theorem \ref{PR} by considering the sets $[A]_{\delta} = \{[a]_{\delta} : a \in A\} \subset \delta \Z$ and $[B]_{\delta} := \{[b]_{\delta} ; b \in B\} \subset \delta \Z$, where $[x]_{\delta} \in \delta \Z$ stands for the largest number $\delta n \in \delta \Z$ satisfying $\delta n \leq x$. Then the hypothesis $N(A + B,\delta) \lessapprox |A|$ implies that $|[A]_{\delta} + [B]_{\delta}| \lessapprox |[A]|_{\delta}$, so Theorem \ref{PR} can be applied.
\end{remark}

\begin{proof}[Proof of Proposition \ref{productProp}] For later technical convenience, I will already make the assumption that all the vectors in $E$ are "roughly horizontal", which precisely means the following: $e^{1} \sim 1$ for all $(e^{1},e^{2}) \in E$, and a $\delta$-tube perpendicular to any one of the vectors $e \in E$ contains at most one point of the form $(a,b) \in A_{b} \times \{b\}$ for any fixed $b \in B$. This can be arranged by replacing $E$ and the sets $A_{b}$ by suitable subsets $A_{b}' \subset A_{b}$ and $E' \subset E$ with $|A_{b}'| \sim |A|$ and $|E'| \sim |E|$.

Another convenient extra hypothesis is that $A_{b} \subset \delta \Z$ for all $b \in B$. This can be achieved by replacing the sets $A_{b}$ by the sets $[A_{b}]_{\delta} := \{[a]_{\delta} : a \in A_{b}\}$. Neither the hypotheses nor the conclusion of the theorem are relevantly affected by the passage from $A_{b}$ to $[A_{b}]_{\delta}$, since the sets $A_{b}$ were assumed to be $\delta$-separated to begin with. 

The proof can now start in earnest. I make the counter assumption that
\begin{equation}\label{form1} N(\pi_{e}(P),\delta) \lessapprox \delta^{-s}, \qquad e \in E, \end{equation}
and gradually work towards a contradiction. Fix a vector $e_{0} = (e_{0}^{1},e_{0}^{2}) \in E$, and write $A := \pi_{e_{0}}(P)$, so that 
\begin{equation}\label{form109} N(A,\delta) \approx \delta^{-s} \end{equation}
by \eqref{form1}. I will first argue that one may assume $e_{0} = (1,0)$ without loss of generality. Note that
\begin{displaymath} e_{0}^{1}A_{b} + e_{0}^{2}b \subset \pi_{e_{0}}(P) \subset A \end{displaymath}
for every $b \in B$. Now, if $A_{b}$ is replaced by $A_{b}' := e_{0}^{1}A_{b} + e_{0}^{2}b$, and $P'$ is built from these $A_{b}'$ as in \eqref{P}, then it is clear that $P'$ is of the form discussed in the statement of the theorem, and
\begin{displaymath} \pi_{(1,0)}(P') \subset A, \end{displaymath}
and \eqref{form1} holds for $P'$ and the vectors $E' := \{(e^{1}/e_{0}^{1},e^{2} - e^{1}[e^{2}_{0}/e^{1}_{0}]) : (e^{1},e^{2}) \in E\}$. Since $E'$ is obtained from $E$ by an affine transformation of determinant $1/e_{0}^{1} \sim 1$, one sees that $E'$ is a $(\delta,s)$-set of cardinality $\approx \delta^{-s}$. Thus, one can first prove the theorem for $P'$ instead of $P$, and finally do the affine transformation in the other direction to get the result for $P$. So, assume without loss of generality that $\pi_{(1,0)}(P) \subset A$, which implies that
\begin{equation}\label{A} A_{b} \subset A, \qquad b \in B. \end{equation}
Finally, since one was also allowed to assume $A_{b} \subset \delta \Z$, it follows from \eqref{form109} that
\begin{equation}\label{form10} A \subset \delta \Z \quad \text{and} \quad |A| \approx \delta^{-s}. \end{equation}

 For each $e \in E$, cover $P$ by $\lessapprox \delta^{-s}$ tubes of dimensions $\delta \times 10$, perpendicular to $e$. Denote these tubes by $\calT_{e}$, and write
\begin{displaymath} \calT := \bigcup_{e \in E} \calT_{e}, \end{displaymath}
so that $|\calT| \lessapprox \delta^{-2s}$. Then, for $e \in E$ and distinct $p,q \in P$, write $p \sim_{e} q$, if there exists $T \in \calT_{e}$ such that $p,q \in T$. Further, define $p \sim q$, if $p \sim_{e} q$ for some $e \in E$ (that is, $p,q \in T$ for some $T \in \calT$). The first task is to find a lower bound for the number of pairs
\begin{displaymath} Q := \{(p,q) \in P \times P : p \sim q\}. \end{displaymath} 
The desired estimate is $|Q| \gtrapprox \delta^{-2s - 2\tau} \approx |P|^{2}$. To this end, note that for fixed $e \in E$, it is easy to check (using Cauchy-Schwarz) that
\begin{displaymath} |\{(p,q) \in P \times P : p \sim_{e} q\}| \gtrapprox \delta^{-s - 2\tau}, \end{displaymath}
so that
\begin{equation}\label{form2} \sum_{e \in E} |\{(p,q) \in P \times P : p \sim_{e} q\}| \gtrapprox \delta^{-2s - 2\tau} \approx |P|^{2}. \end{equation}
This almost looks like the desired estimate, but the sets in the summation need not be disjoint for distinct $e \in E$. However, using the $(\delta,s)$-set property of $E$ (and the geometry of $\{e \in S^{1} : p \sim_{e} q\}$), the left hand side of \eqref{form2} can be estimated from above as follows:
\begin{align*} \textup{L.H.S of \eqref{form2} } & = \sum_{(p,q) \in Q} |\{e \in E : p \sim_{e} q\}| \lesssim \sum_{(p,q) \in Q} \frac{1}{|p - q|^{s}}\\
& \leq |Q|^{1/q} \Bigg(\sum_{p \neq q} \frac{1}{|p - q|^{s + \tau}} \Bigg)^{1/p} \lesssim_{\log} |Q|^{1/q}|P|^{2/p}, \end{align*} 
where $p > 1$ is chosen so that $ps = s + \tau$, and the last inequality follows from the fact that $P$ is a $(\delta,s + \tau)$-set of cardinality $\approx \delta^{-s - \tau}$. It follows from this and \eqref{form2} that
\begin{equation}\label{form3} |Q| \gtrapprox |P|^{2}, \end{equation}
as claimed. Further, note that
\begin{equation}\label{form4} \sum_{b_{1} \neq b_{2}} |\{(p,q) \in A^{b_{1}} \times A^{b_{2}} : p \sim q\}| = |Q| \gtrapprox |P|^{2}, \end{equation} 
where $A^{b_{i}} := A_{b_{i}} \times \{b_{i}\} \subset P$. This follows from \eqref{form3} and the fact that the tubes in $\calT$ are fairly vertical (so that there are no relations $p \sim q$ with $p,q \in A^{b}$).

Fixing $b_{1},b_{2}$, let $\calT_{b_{1},b_{2}} \subset \calT$ be a collection of tubes such that every pair $(p,q) \in A^{b_{1}} \times A^{b_{2}}$ with $p \sim q$ is contained in a tube in $\calT_{b_{1},b_{2}}$. Such tubes exist by definition of the relation "$\sim$", but they need not be unique: pick exactly one tube $T_{(p,q)}$ for every pair $(p,q) \in A^{b_{1}} \times A^{b_{2}}$. Then
\begin{displaymath} |\calT_{b_{1},b_{2}}| \geq |\{(p,q) \in A^{b_{1}} \times A^{b_{2}} : p \sim q\}|, \end{displaymath}
because the mapping $(p,q) \mapsto T_{(p,q)}$ is injective by the assumption that the vectors $e$ are "roughly horizontal" (see the first paragraph of the proof for a precise statement). Consequently, by \eqref{form4},
\begin{equation}\label{form105} \sum_{b_{1}, b_{2}} |\calT_{b_{1},b_{2}}| \gtrapprox |P|^{2}. \end{equation}
(Here $\calT_{b,b} := \emptyset$ for $b \in B$.) Since $\calT_{b_{1},b_{2}} \subset \calT$, and $|\calT| \lessapprox \delta^{-2s}$, one sees from \eqref{form105} that $|\calT_{b_{1},b_{2}}| \approx \delta^{-2s}$ for "most" pairs $(b_{1},b_{2}) \in B^{2}$. In fact, something slightly better is needed, and follows from the next Cauchy-Schwarz estimate, and $|\calT| \lessapprox \delta^{-2s}$:
\begin{align*} \sum_{b_{1},b_{2},b_{3}} |\calT_{b_{1},b_{2}} \cap \calT_{b_{2},b_{3}}| & = \sum_{T \in \calT} \sum_{b_{2}} \sum_{b_{1}, b_{3}} \chi_{\calT_{b_{1},b_{2}}}(T)\chi_{\calT_{b_{2},b_{3}}}(T)\\
& = \sum_{T \in \calT} \sum_{b_{2}} \left(\sum_{b} \chi_{\calT_{b,b_{2}}}(T) \right)^{2}\\
& \geq \frac{1}{|\calT||B|} \left( \sum_{T \in \calT} \sum_{b,b_{2}} \chi_{\calT_{b,b_{2}}}(T) \right)^{2}\\
& \gtrapprox \frac{|P|^{4}}{|\calT||B|} \gtrapprox \delta^{-2s}|B|^{3}. \end{align*} 
Since $|\calT_{b_{1},b_{2}} \cap \calT_{b_{2},b_{3}}| \lessapprox \delta^{-2s}$ for any triple $(b_{1},b_{2},b_{3})$, it follows that there exist $\approx |B|^{3}$ triples $(b_{1},b_{2},b_{3})$ with the property that $|\calT_{b_{1},b_{2}} \cap \calT_{b_{2},b_{3}}| \approx \delta^{-2s}$. As will be made precise in a moment, the condition $|\calT_{b_{1},b_{2}} \cap \calT_{b_{2},b_{3}}| \approx \delta^{-2s}$ roughly means that there are $\approx \delta^{-2s}$ points in $A_{b_{1}} \times A_{b_{2}}$ such that the projection of these points is small in a certain direction, determined by $b_{1},b_{2},b_{3}$. 

Consider a triple $(b_{1},b_{2},b_{3}) \in B^{3}$ with $|\calT_{b_{1},b_{2}} \cap \calT_{b_{2},b_{3}}| \approx \delta^{-2s}$. Fix a tube $T \in \calT_{b_{1},b_{2}} \cap \calT_{b_{2},b_{3}}$. Since $T \in \calT_{b_{1},b_{2}}$, one has $T = T_{(p_{1},q)}$ for some (unique) pair of points 
\begin{displaymath} p_{1} = (a_{1},b_{1}) \in A_{b_{1}} \times \{b_{1}\} \quad \text{and} \quad q = (a_{2},b_{2}) \in A_{b_{2}} \times \{b_{2}\}. \end{displaymath}
Similarly, because $T \in \calT_{b_{2},b_{3}}$, there exists yet another (unique) point 
\begin{displaymath} p_{3} = (a_{3},b_{3}) \in A_{b_{3}} \times \{b_{3}\} \end{displaymath}
such that $T = T_{(q,p_{3})}$. In particular, gathering all the pairs $(a_{1},a_{3}) \in A_{b_{1}} \times A_{b_{3}}$ obtained this way, one sees that the tubes $T \in \calT_{b_{1},b_{2}} \cap \calT_{b_{2},b_{3}}$ give rise to a subset 
\begin{displaymath} G_{b_{1},b_{2},b_{3}}' \subset A_{b_{1}} \times A_{b_{3}} \stackrel{\eqref{A}}{\subset} A \times A \end{displaymath}
of cardinality 
\begin{displaymath} |G_{b_{1},b_{2},b_{3}}'| = |\calT_{b_{1},b_{2}} \cap \calT_{b_{2},b_{3}}| \approx \delta^{-2s} \approx |A|^{2}. \end{displaymath}

From now on, restrict attention to triples $(b_{1},b_{2},b_{3}) \in B^{3}$ such that 
\begin{equation}\label{separation} \min_{i \neq j} |b_{i} - b_{j}| \approx 1. \end{equation}
Since the triples \textbf{failing} this condition have cardinality far less than $|B|^{3}$ (using the $(\delta,\tau)$-set hypothesis of $B$, and the assumption $|B| \approx \delta^{-\tau}$), one sees that $|\calT_{b_{1},b_{2}} \cap \calT_{b_{2},b_{3}}| \approx \delta^{-2s}$ holds for $\approx |B|^{3}$ triples satisfying \eqref{separation}. Fix one such triple, assume that $b_{1} < b_{3}$, and consider a pair $(a_{1},a_{3}) \in G_{b_{1},b_{2},b_{3}}'$. Recall how such points arise, and the notation for $p_{1},q,p_{3}$. Let 
\begin{displaymath} L = \left\{x = \frac{a_{3} - a_{1}}{b_{3} - b_{1}}y + \frac{a_{1}b_{3} - a_{3}b_{1}}{b_{3} - b_{1}} : y \in \R\right\} \end{displaymath}
be the line spanned by $p_{1}$ and $p_{3}$; then, since $p_{1},q,p_{3}$ all lie in the common $\delta$-tube $T$, the line $L$ passes at distance $\lesssim \delta$ from $q = (a_{2},b_{2}) \in A_{b_{2}} \times \{b_{2}\}$, which is equivalent to
\begin{displaymath} \left| \frac{a_{3}(b_{2} - b_{1}) + a_{1}(b_{3} - b_{2})}{b_{3} - b_{1}} - a_{2} \right| \lesssim \delta. \end{displaymath}
Recalling \eqref{separation}, this further implies that
\begin{displaymath} \left| \left(a_{1} + \frac{b_{2} - b_{1}}{b_{3} - b_{2}} a_{3} \right) - \frac{b_{3} - b_{1}}{b_{3} - b_{2}}a_{2} \right| \lessapprox \delta. \end{displaymath}
Consequently, if $\pi_{b_{1},b_{2},b_{3}}$ stands for the projection-like mapping
\begin{equation}\label{pi} \pi_{b_{1},b_{2},b_{3}}(x,y) = x + \frac{b_{2} - b_{1}}{b_{3} - b_{2}}y, \end{equation}
then 
\begin{displaymath} \dist \left(\pi_{b_{1},b_{2},b_{3}}(G_{b_{1},b_{2},b_{3}}'),\frac{b_{3} - b_{1}}{b_{3} - b_{2}}A_{b_{2}} \right) \lessapprox \delta. \end{displaymath}
Observing  $N([(b_{3} - b_{1})/(b_{3} - b_{2})]A_{b_{2}},\delta) \lessapprox \delta^{-s}$, it follows that
\begin{equation}\label{form11} N(\pi_{b_{1},b_{2},b_{3}}(G_{b_{1},b_{2},b_{3}}'),\delta) \lessapprox \delta^{-s} \approx |A|. \end{equation}

In fact, this holds for any triple $(b_{1},b_{2},b_{3}) \in B^{3}$ satisfying \eqref{separation} by definition of $G_{b_{1},b_{2},b_{3}}'$, but the information is most useful, if $|G_{b_{1},b_{2},b_{3}}'| \approx |A|^{2}$. Write
\begin{displaymath} F_{b_{1},b_{2},b_{3}} := \left\{\left(a_{1},\left[\frac{b_{2} - b_{1}}{b_{3} - b_{2}}a_{2}\right]_{\delta} \right) : (a_{1},a_{2}) \in G'_{b_{1},b_{2},b_{3}}\right\} \subset A \times \left[\frac{b_{2} - b_{1}}{b_{3} - b_{2}}A\right]_{\delta}. \end{displaymath}
It follows easily from \eqref{form11} (and recalling $A \subset \delta \Z$) that
\begin{displaymath} |\{f_{1} + f_{2} : (f_{1},f_{2}) \in F_{b_{1},b_{2},b_{3}} \}| \lessapprox |A|. \end{displaymath}
Moreover, since $|(b_{2} - b_{1})/(b_{3} - b_{2})| \approx 1$ for every triple $(b_{1},b_{2},b_{3})$ satisfying \eqref{separation}, it follows that $|F_{b_{1},b_{2},b_{3}}| \approx |A|^{2}$ whenever \eqref{separation} holds and $|G_{b_{1},b_{2},b_{3}}'| \approx |A|^{2}$. For such a \emph{good} triple $(b_{1},b_{2},b_{3})$, the Balog-Szemer\'edi-Gowers theorem, Theorem \ref{BSG}, implies that there exist subsets
\begin{displaymath} D^{1}_{b_{1},b_{2},b_{3}} \subset A \quad \text{and} \quad \tilde{D}^{2}_{b_{1},b_{2},b_{3}} \subset \left[\frac{b_{2} - b_{1}}{b_{3} - b_{2}}A\right]_{\delta} \end{displaymath}
such that $|D^{1}_{b_{1},b_{2},b_{3}}|,|\tilde{D}^{2}_{b_{1},b_{2},b_{3}}| \approx |A|$,
\begin{equation}\label{form17} |(D^{1}_{b_{1},b_{2},b_{3}} \times \tilde{D}^{2}_{b_{1},b_{2},b_{3}}) \cap F_{b_{1},b_{2},b_{3}}| \approx |A|^{2} \end{equation}
and
\begin{equation}\label{form118} |D^{1}_{b_{1},b_{2},b_{3}} + \tilde{D}^{2}_{b_{1},b_{2},b_{3}}| \lessapprox |A|. \end{equation}
Let
\begin{displaymath} D_{b_{1},b_{2},b_{3}}^{2} := \left\{a \in A : \left[\frac{b_{2} - b_{1}}{b_{3} - b_{2}}a \right]_{\delta} \in \tilde{D}^{2}_{b_{1},b_{2},b_{3}} \right\}. \end{displaymath}
It then follows from the definition of $F_{b_{1},b_{2},b_{3}}$ and \eqref{form17} that
\begin{equation}\label{form112} |G_{b_{1},b_{2},b_{3}}| := |(D^{1}_{b_{1},b_{2},b_{3}} \times D^{2}_{b_{1},b_{2},b_{3}}) \cap G_{b_{1},b_{2},b_{3}}'| \approx |A|^{2}. \end{equation}
for a good triple $(b_{1},b_{2},b_{3})$. Moreover, \eqref{form118} easily implies that
\begin{equation}\label{form13} N_{1} := N\left(D^{1}_{b_{1},b_{2},b_{3}} + \frac{b_{2} - b_{1}}{b_{3} - b_{2}}D^{2}_{b_{1},b_{2},b_{3}},\delta\right) \lessapprox |A|. \end{equation}
Finally, combining \eqref{form13} with the Pl\"unnecke-Ruzsa inequality, Theorem \ref{PR}, gives
\begin{equation}\label{form16} N_{2} := (D^{2}_{b_{1},b_{2},b_{3}} + D^{2}_{b_{1},b_{2},b_{3}},\delta) \lessapprox |A| \end{equation}
for any good triple $(b_{1},b_{2},b_{3})$. Since there are $\approx |B|^{3}$ good triples $(b_{1},b_{2},b_{3})$, one can find $b_{2},b_{3}$ such that \eqref{form112}--\eqref{form16} hold for $\approx |B|$ choices of $b_{1}$. Fix such $b_{2},b_{3} \in B$. Then, a simple Cauchy-Schwarz argument shows that $|G_{b_{1},b_{2},b_{3}} \cap G_{b_{1}',b_{2},b_{3}}| \approx |A|^{2}$ for $\approx |B|^{2}$ pairs $(b_{1},b_{1}')$, so that one can finally also fix $b_{1} \in B$ such that
\begin{equation}\label{form114} |G_{b}| := |G_{b_{1},b_{2},b_{3}} \cap G_{b,b_{2},b_{3}}| \approx |A|^{2} \end{equation}
for $\approx |B|$ choices of $b \in B$. For this specific (good triple) $(b_{1},b_{2},b_{3})$, I denote the set of $b \in B$ such that \eqref{form114} holds by $B_{0}$. With \eqref{form13} in mind, write 
\begin{displaymath} c_{b} := \frac{b_{2} - b}{b_{3} - b_{2}}, \qquad b \in B_{0}, \end{displaymath}
and abbreviate $c := c_{b_{1}}$ (note that $|c|,|c_{b}| \approx 1$ for all $b \in B_{0}$ by \eqref{separation}). Also, write
\begin{displaymath} D^{1} := D_{b_{1},b_{2},b_{3}}^{1}(\delta) \quad \text{and} \quad D^{2} := D^{2}_{b_{1},b_{2},b_{3}}(\delta), \end{displaymath}
where $R(\delta)$ stands for the $\delta$-neighbourhood of $R \subset \R^{d}$. To complete the proof, I repeat an argument of Bourgain (see p. 219 in \cite{Bo}). Assume for a moment that $x \in cD^{2} \times D^{2} \subset \R^{2}$ and $b \in B_{0}$. Then $\chi_{-G_{b}(\delta) - y}(x) = 1$, whenever 
\begin{displaymath} y \in -G_{b}(\delta) - x \subset -(D^{1} \times D^{2}) - (cD^{2} \times D^{2}) = -(D^{1} + cD^{2}) \times -(D^{2} + D^{2}), \end{displaymath}
(the first inclusion uses \eqref{form112} and \eqref{form114}) and the Lebesgue measure of such choices $y$ is evidently $\calL^{2}(G_{b}(\delta))$. This gives the inequality
\begin{displaymath} \chi_{cD^{2} \times D^{2}} \leq \frac{1}{\calL^{2}(G_{b}(\delta))} \int_{-(D^{1} + cD^{2}) \times -(D^{2} + D^{2})} \chi_{-G_{b} - y}\, dy \end{displaymath} 
which easily implies
\begin{displaymath} \chi_{cD^{2} + c_{b}D^{2}} \leq \frac{1}{\calL^{2}(G_{b}(\delta))} \int_{-(D^{1} + cD^{2}) \times -(D^{2} + D^{2})} \chi_{\pi_{b,b_{2},b_{3}}(-G_{b}) - \pi_{b,b_{2},b_{3}}(y)} \, dy, \quad b \in B_{0}, \end{displaymath}
by the definition of $\pi_{b,b_{2},b_{3}}$ (see \eqref{pi}). Finally, integrating the previous inequality and recalling \eqref{form13}, \eqref{form16} and \eqref{form11}, one obtains
\begin{equation}\label{conclusion} \calL^{1}(cD^{2} + c_{b}D^{2}) \lesssim \frac{(N_{1}\delta)(N_{2}\delta)}{\calL^{2}(G_{b}(\delta))} \calL^{1}(\pi_{b_{1},b,b_{2}}(G_{b})) \lessapprox \delta^{1 - s} \approx \delta |A|, \quad b \in B_{0}. \end{equation}
However, $cD^{2} \times D^{2}$ is the $\delta$-neighbourhood of a generalised $(\delta,2s)$-set in the plane, so Bourgain's discretized projection theorem, Theorem 5 in \cite{Bo}, can be applied with $\alpha := 2s < 2 =: d$ and any $\kappa > 0$. If $\mu_{1}$ is the natural probability measure on the $\delta$-neighbourhood of $\{c_{b} : b \in B_{0}\}$, then $\mu_{1}$ satisfies assumption (0.14) from \cite{Bo} for any $\tau_{0} > 0$ (recall the definition of the numbers $c_{b}$, in particular $|b_{2} - b_{3}| \approx 1$, recall that $B_{0} \subset B$ has cardinality $|B_{0}| \approx |B|$, and $B$ is a $(\delta,\tau)$-set). The conclusion (in (0.19) of \cite{Bo}) is that some $b \in B_{0}$ should violate \eqref{conclusion}. Thus, a contradiction is reached, and the proof is complete. 
\end{proof}

\section{General sets}

So far, the the scale $\delta > 0$ has been small but otherwise arbitrary. To prove Theorem \ref{main}, one needs to deal with a set $K \subset B(0,1)$ with $\calH^{1}(K) > 0$. To extract useful information from the main counter assumption \eqref{counterAss}, namely that
\begin{equation}\label{form24} N(\pi_{e}(K),\delta) \leq \delta^{-s - \epsilon_{0}}, \qquad 0 < \delta \leq \delta_{0}, \end{equation}
for all $e \in E$ with $\calH^{s}(E) > 0$, I will need a special scale $\delta > 0$ with the properties that $K$ looks approximately $1$-dimensional (in a rather weak sense) both at scales $\delta^{1/2}$ and $\delta$. Such a scale can be found with a pigeonholing argument, given in the first subsection below. Then, since the counter assumption concerns all (small) scales $\delta > 0$, it applies in particular to the specific scale the pigeon helped to find.

\subsubsection{Choosing the scale $\delta$} By choosing a subset of $K$, one may assume that $0 < \calH^{1}(K) < \infty$. I treat $\calH^{1}(K)$ as an absolute constant, so that $\calH^{1}(K) \sim 1$. Let $\mu$ be a Frostman measure supported on $K$, that is, $\mu(K) = 1$ and $\mu(B(x,r)) \lesssim r$ for all balls $B(x,r) \subset \R^{2}$. Next, let $\calB$ be an efficient $\delta_{0}$-cover for $K$, that is,
\begin{equation}\label{form19} \sup\{\diam B : B \in \calB\} \leq \delta_{0} \quad \text{and} \quad \sum_{B \in \calB} \diam(B) \lesssim \calH^{1}(K) \sim 1. \end{equation} 
For $j \in \N$ such that $2^{-j} \leq \delta_{0}$, set $\calB_{j} := \{B \in \calB : \diam(B) \sim 2^{-j}\}$, and observe that
\begin{displaymath} \sum_{2^{-j} \leq \delta_{0}} \sum_{B \in \calB_{j}} \mu(B) \geq \mu(K) = 1. \end{displaymath}
In particular, there exists an index $j \in \N$ with $2^{-j} \leq \delta_{0}$ and
\begin{equation}\label{form117} \sum_{B \in \calB_{j}} \mu(B) \gtrsim \frac{1}{(j - j_{0} + 1)^{2}}. \end{equation}
Here $j_{0} \in \N$ satisfies $2^{-j_{0}} \sim \delta_{0}$. Now, I declare that
\begin{displaymath} \delta := 2^{-2j}, \end{displaymath}
so that $\delta^{1/2} = 2^{-j}$. In particular, \eqref{form117} implies that
\begin{equation}\label{form18} \sum_{B \in \calB_{j}} \mu(B) \gtrsim_{\log} 1. \end{equation}
Observe that $|\calB_{j}| \lesssim \delta^{-1/2}$ by \eqref{form19}, and on the other hand every ball $B \in \calB_{j}$ satisfies $\mu(B) \lesssim \delta^{1/2}$. Thus, \eqref{form18} implies that there are $\sim_{\log} \delta^{-1/2}$ balls in $\calB_{j}$, denoted by $\calB_{j}^{G}$, such that
\begin{equation}\label{form20} \mu(B) \gtrsim_{\log} \delta^{1/2}, \qquad B \in \calB_{j}^{G}. \end{equation}
Discarding a few balls if necessary, one may assume that the 
\begin{equation}\label{form21} \dist(B,B') \geq \delta^{-1/2}, \qquad B,B' \in \calB_{j}^{G}. \end{equation}
For each ball $B \in B_{j}^{G}$, choose a $(\delta,1)$-set $P_{B} \subset B$ with $|P_{B}| \gtrsim_{\log} \delta^{-1/2}$. This is possible by Proposition \ref{deltasSet}, since \eqref{form20} and the linear growth of $\mu$ imply that $\calH_{\infty}^{1}(B \cap K) \gtrsim_{\log} \delta^{1/2}$.

For each ball $B \in \calB_{j}^{G}$, pick a single point $p_{B} \in P_{B}$, and write $P_{\delta^{1/2}} := \{p_{B} : B \in \calB_{j}^{G}\}$. Then $|P_{\delta^{1/2}}| \sim_{\log} \delta^{-1/2}$. Also, write
\begin{displaymath} P := P_{\delta} := \bigcup_{B \in \calB_{j}^{G}} P_{B}. \end{displaymath}
Then $|P_{\delta}| \sim_{\log} \delta^{-1}$, and $P_{B} = B \cap P$. I conclude the section by verifying that $P_{\delta^{1/2}}$ is a $(\delta^{1/2},1)$-set, and $P$ is a $(\delta,1)$-set. Towards the first claim, fix $x \in \R^{2}$ and $\delta^{1/2} \leq r \leq 1$. Then, writing $M := |B(x,r) \cap P_{\delta^{1/2}}|$, observe that
\begin{displaymath} r \gtrsim \mu(B(x,2r)) \geq \sum_{p_{B} \in B(x,r) \cap P_{\delta^{1/2}}} \mu(B) \gtrsim_{\log} M\delta^{1/2} \end{displaymath}
by \eqref{form20}. This gives $M \lesssim_{\log} r/\delta^{1/2}$, as desired. Next, consider the claim for $P$. For $\delta \leq r \leq \delta^{1/2}$, note that
\begin{displaymath} |B(x,r) \cap P| = |B(x,r) \cap P_{B}| \lesssim_{\log} \frac{r}{\delta} \end{displaymath}
by \eqref{form21} and the fact that $P_{B}$ is a $(\delta,1)$-set. Finally, for $\delta^{1/2} \leq r \leq 1$, observe that 
\begin{displaymath} \frac{r}{\delta} \gtrsim_{\log} |P_{\delta^{1/2}} \cap B(x,2r)| \cdot \delta^{-1/2} \gtrsim_{\log} |P \cap B(x,r)|, \end{displaymath}
since for every point in $p \in P \cap B(x,r)$, one has $p \in P_{B}$ for a certain $B \in \calB_{j}^{G}$, and then $p_{B} \in B(x,2r) \cap P_{\delta^{1/2}}$.  

I recap the achievements so far. For a certain scale $\delta \leq (\delta_{0})^{2}$, the following hold:
\begin{itemize}
\item $P \subset K$ is a $(\delta,1)$-set of cardinality $|P| \sim_{\log} \delta^{-1}$.
\item $P$ can be covered by $\sim_{\log} \delta^{-1/2}$ balls of diameter $\sim \delta^{1/2}$ in the collection $\calB_{j}^{G}$, which I will henceforth denote simply by $\calB$. For every $B \in \calB$, the set $P$ contains a special point $p_{B}$, and the set $P_{\delta^{1/2}} \subset P$ of these special points is a $(\delta^{1/2},1)$-set of cardinality $|P_{\delta^{1/2}}| \sim_{\log} \delta^{-1/2}$. 
\item Since $P_{\delta^{1/2}} \subset P \subset K$ and $\delta^{1/2} \leq \delta_{0}$, the main counter assumption \eqref{form24} implies that
\begin{equation}\label{form5} N(\pi_{e}(P_{\delta^{1/2}}),\delta^{1/2}) \leq \delta^{-(s + \epsilon_{0})/2} \end{equation}
and
\begin{equation}\label{form22} N(\pi_{e}(P),\delta) \leq \delta^{-s - \epsilon_{0}} \end{equation}
for $e \in E$. 
\end{itemize}

\subsubsection{The sets $E$ and $E_{\delta^{1/2}}$}\label{Edelta} Recall from Remark \ref{boxDimE} that
\begin{displaymath} N(E,\delta') \leq (\delta')^{-s - 2\epsilon_{0}}, \qquad \delta' \leq \delta_{0}. \end{displaymath}
This will presently be applied with $\delta' = \delta^{1/2}$, where $\delta > 0$ is the fixed scale from the discussion above. Since $\calH^{s}(E) > 0$, one can find (by Proposition \ref{deltasSet}) a $(\delta,s)$-subset of cardinality $\sim \delta^{-s}$. This finite subset will henceforth be denoted by $E$; note that \eqref{form5} and \eqref{form22} remain trivially valid. Since $N(E,\delta^{1/2}) \leq \delta^{-s/2 - \epsilon_{0}}$, and every arc of length $\delta^{1/2}$ can only contain $\lesssim \delta^{-s/2}$ points in $E$, it follows that there exist at least $\gtrsim \delta^{-s/2}$ arcs $J_{1},\ldots,J_{N} \subset S^{1}$ of length $\delta^{1/2}/10$ with
\begin{displaymath} |E \cap J_{i}| \gtrsim \delta^{-s/2 + \epsilon_{0}} \gtrapprox \delta^{-s/2}. \end{displaymath}
For every arc $J_{i}$, pick a single point, and denote the set thus obtained by $E_{\delta^{1/2}}$. By discarding a few points, one may assume that $E_{\delta^{1/2}}$ is $\delta^{1/2}$-separated, and $|E_{\delta^{1/2}}| \sim \delta^{-s/2}$. Moreover, $E_{\delta^{1/2}}$ is a generalised $(\delta^{1/2},s)$-set, since for $x \in E_{\delta^{1/2}}$ and $r \geq \delta^{1/2}$, one has
\begin{displaymath} \left(\frac{r}{\delta} \right)^{s} \gtrsim |E \cap B(x,r)| \gtrapprox \delta^{-s/2}|E_{\delta^{1/2}} \cap B(x,r)|. \end{displaymath}

\subsubsection{The distribution of of $P$ in $\delta^{1/2}$-tubes}  Note that \eqref{form5} holds for all $e \in E_{\delta^{1/2}}$. Thus, for every $e \in E_{\delta^{1/2}}$, the set $P_{\delta^{1/2}}$ is covered by a collection of $\leq \delta^{-s/2 - \epsilon_{0}} \lessapprox \delta^{-s/2}$ tubes $\calT_{e}$ of width $\delta^{1/2}$ and perpendicular to $e$. The next goal is to show that, for a typical choice of $e \in E_{\delta^{1/2}}$ and $T \in \calT_{e}$, the set $T \cap P_{\delta^{1/2}}$ is essentially a $(\delta^{1/2},1 - s)$-set. This is a consequence of the next estimate:
\begin{align*} \frac{1}{|E_{\delta^{1/2}}|} \sum_{e \in E_{\delta^{1/2}}} & \sum_{T \in \calT_{e}} \mathop{\sum_{p,q \in T \cap P_{\delta^{1/2}}}}_{p \neq q} \frac{1}{|p - q|^{1 - s}}\\
& \leq \frac{1}{|E_{\delta^{1/2}}|} \mathop{\sum_{p,q \in P_{\delta^{1/2}}}}_{p \neq q} \frac{1}{|p - q|^{1 - s}} \sum_{e \in E_{\delta^{1/2}}} \chi_{\{p,q \in T \text{ for some } T \in \calT_{e}\}}\\
& \lessapprox \frac{1}{|E_{\delta^{1/2}}|} \mathop{\sum_{p,q \in P_{\delta^{1/2}}}}_{p \neq q} \frac{1}{|p - q|} \sim_{\log} \delta^{s/2-1}. \end{align*} 
In passing between the second and third line, the (generalised) $(\delta^{1/2},s)$-set property of $E_{\delta^{1/2}}$ was used, while the last "$\sim_{\log}$" equation follows from the cardinality estimate $|E_{\delta^{1/2}}| \sim \delta^{-s/2}$ and the fact that $P_{\delta^{1/2}}$ is a $(\delta^{1/2},1)$-set. By discarding a constant fraction of points from $E_{\delta^{1/2}}$, one may now assume that
\begin{equation}\label{form4} \sum_{T \in \calT_{e}} \mathop{\sum_{p,q \in T \cap P_{\delta^{1/2}}}}_{p \neq q} \frac{1}{|p - q|^{1 - s}} \lessapprox \delta^{s/2 - 1} \end{equation}
holds uniformly for all $e \in E_{\delta^{1/2}}$. 

\subsubsection{Analysis at scale $\delta$} For $B \in \calB$, recall that $P_{B} = P \cap B$ is a $(\delta,1)$-set of cardinality $|P_{B}| \approx \delta^{-1/2}$. I now claim the following: for fixed $B \in \calB$, there are $\sim \delta^{-s/2}$ vectors in $E_{\delta^{1/2}}$ such that $\pi_{e}(P_{B})$ contains a $(\delta,s)$-set of cardinality $\approx \delta^{-s/2}$.\footnote{The claim is close to Proposition \ref{kaufmanProp}: the main difference is that Proposition \ref{kaufmanProp} only requires the set of directions $E$ to be $\delta$-separated and of cardinality $\approx \delta^{-s}$ (as opposed to being a $(\delta,s)$-set), but also the conclusion there does not guarantee that $\pi_{e}(P)$ would contain a large $(\delta,s)$-set for any $e \in E$. In fact, easy examples show that a cardinality estimate on $E$ alone does not yield the stronger conclusion desired here.} This is fairly standard, but I record the details for completeness. First, observe that $\delta^{-1/2}P_{B}$ is a $(\delta^{1/2},1)$-set of cardinality $\approx \delta^{-1/2}$. Next, consider the measures
\begin{displaymath} \mu_{B} := \frac{1}{|P_{B}|} \sum_{p \in \delta^{-1/2}P_{B}} \frac{\chi_{B(p,\delta^{1/2})}}{\delta} \end{displaymath}
and
\begin{displaymath} \nu := \frac{1}{|E_{\delta^{1/2}}|} \sum_{e \in E_{\delta^{1/2}}} \frac{\chi_{B(e,\delta^{1/2}) \cap S^{1}}}{\delta^{1/2}}, \end{displaymath}
and note that $\mu_{B}(\R^{2}) \sim 1 \sim \nu(S^{1})$. For $r \geq \delta^{1/2}$, one has the uniform estimates $\mu(B(x,r)) \lessapprox r$ and $\nu(B(e,r)) \lesssim r^{s}$, while for $0 < r \leq \delta^{1/2}$ one has the obvious improved estimates. After some straightforward computations, it follows that
\begin{equation}\label{form29} \int_{S^{1}} I_{s}(\pi_{e\sharp}\mu) \, d\nu e := \iint \left[ \int_{S^{1}} \frac{d\nu e}{|\pi_{e}(x) - \pi_{e}(y)|^{s}} \right] \, d\mu x \, d\mu y \lessapprox 1. \end{equation}
Indeed, the inner integral (in brackets) can be estimated by $\lesssim \log(1/\delta)/|x - y|^{s}$, and then
\begin{displaymath} \int_{S^{1}} I_{s}(\pi_{e\sharp}\mu) \, d\nu e \lesssim \log(1/\delta) \int \left[ \frac{d\mu x}{|x - y|^{s}} \right] \, d\mu y \lessapprox 1, \end{displaymath}
since the inner integral is again bounded by $\lessapprox 1$ for any $y \in \R^{2}$. Consequently, $I_{s}(\pi_{e\sharp}\mu) \lessapprox 1$ for a set of vectors $E_{0} \subset S^{1}$ of $\nu$-measure at least $1/2$. One evidently needs $\gtrsim \delta^{-s/2}$ arcs of the form $B(e,\delta^{1/2}) \cap S^{1}$, $e \in E_{\delta^{1/2}}$ to cover $E_{0}$, and this gives rise to a subset $E_{\delta^{1/2}}^{0} \subset E_{\delta^{1/2}}$ with $|E_{\delta^{1/2}}^{0}| \sim \delta^{-s/2}$. For every $e \in E_{\delta^{1/2}}^{0}$, there exists a vector $e' \in B(e,\delta^{1/2}) \cap S^{1}$ with $I_{s}(\pi_{e'\sharp}\mu) \lessapprox 1$. It follows that $\calH^{s}_{\infty}(\pi_{e'}(\spt \mu)) \gtrapprox 1$, hence $\pi_{e'}(\spt \mu)$ contains a $(\delta^{1/2},s)$-set of cardinality $\approx \delta^{-s/2}$ by Proposition \ref{deltasSet}. Since $\pi_{e'}(\spt \mu)$ is contained in the $\delta^{1/2}$-neighbourhood of $\pi_{e'}(\delta^{-1/2}P_{B})$, the same conclusion holds for $\pi_{e'}(\delta^{-1/2}P_{B})$. Finally, using $|e' - e| \leq \delta^{1/2}$, the conclusion remains valid for $\pi_{e}(\delta^{-1/2}P_{B})$, and thus $\pi_{e}(P_{B})$ contains a $(\delta,s)$-set of cardinality $\approx \delta^{-s/2}$ for every $e \in E_{\delta^{1/2}}^{0}$. 

Now, let $G \subset \calB \times E_{\delta^{1/2}}$ consist of those pairs $(B,e)$ such that $\pi_{e}(P_{B})$ contains a $(\delta,s)$-set of cardinality $\approx \delta^{-s/2}$. Then, the previous argument shows that $|\{e \in E_{\delta^{1/2}} : (B,e) \in G\}| \gtrsim \delta^{-s/2}$ for every $B \in \calB$, and consequently
\begin{displaymath} \frac{1}{|E_{\delta^{1/2}}|} \sum_{e \in E_{\delta^{1/2}}} \sum_{B \in  \calB} \chi_{G}(B,e) \gtrsim |\calB| = |P_{\delta^{1/2}}| \gtrsim_{\log} \delta^{-1/2}. \end{displaymath}
This implies that $|\{B \in \calB : (B,e) \in G\}| \gtrsim_{\log} \delta^{-1/2}$ for some $e = e_{0} \in E_{\delta^{1/2}}$. From this point on, the reader may forget about the rest of the vectors in $E_{\delta^{1/2}}$. Let 
\begin{equation}\label{form32} P_{\delta^{1/2}}^{0} := \{p_{B} : (B,e_{0}) \in G\}, \end{equation} 
so that $|P_{\delta^{1/2}}^{0}| \gtrsim_{\log} \delta^{-1/2}$. 
Recall the family of $\delta^{1/2}$-tubes $\calT := \calT_{e_{0}}$. Let $\calT^{\epsilon_{0}} := \{T \in \calT : |T \cap P_{\delta^{1/2}}^{0}| \geq \delta^{2\epsilon_{0} + (s - 1)/2}\}$. Recalling $|\calT| \lesssim \delta^{-s/2 - \epsilon_{0}}$ (by \eqref{form5}), observe that
\begin{displaymath} \sum_{T \in \calT \setminus \calT^{\epsilon_{0}}} |T \cap P_{\delta^{1/2}}^{0}| \lesssim \delta^{-s/2 - \epsilon_{0}} \cdot \delta^{2\epsilon_{0} + (s - 1)/2} = \delta^{\epsilon_{0}-1/2}.  \end{displaymath}
Since $|P_{\delta^{1/2}}^{0}| \gtrsim_{\log} \delta^{-1/2}$, this implies that, for small enough $\delta > 0$, at least $|P_{\delta^{1/2}}^{0}|/2$ points of $P_{\delta^{1/2}}^{0}$ are contained in the union of the tubes in $\calT^{\epsilon_{0}}$. Replacing $\calT$ by $\calT^{\epsilon_{0}}$, I may -- and will -- henceforth assume that $|T \cap P^{0}_{\delta^{1/2}}| \gtrapprox \delta^{(s - 1)/2}$ holds uniformly for all the tubes in $\calT$.

As a further regularisation, I claim that $\gtrapprox |P_{\delta^{1/2}}^{0}|$ points in $P_{\delta^{1/2}}^{0}$ are contained in tubes $T \in \calT$ satisfying the converse inequality $|T \cap P_{\delta^{1/2}}^{0}| \lessapprox \delta^{(s - 1)/2}$. This follows from \eqref{form4}. Recalling that $|T \cap P_{\delta^{1/2}}^{0}| \gtrapprox \delta^{(s - 1)/2}$ for every $T \in \calT$, there exists $C \gtrapprox \delta^{(s - 1)/2}$ such that the "$C$-dense tubes" $T \in \calT^{C}$, with $|P_{\delta^{1/2}}^{0} \cap T| \sim C$, cover a total of $\gtrapprox |P_{\delta_{1/2}}^{0}|$ points in $P_{\delta^{1/2}}^{0}$. Then $|\calT^{C}| \approx |P_{\delta^{1/2}}^{0}|/C \approx \delta^{-1/2}/C$, and hence, by \eqref{form4},
\begin{displaymath} \delta^{s/2 - 1} \gtrapprox \sum_{T \in \calT^{C}} \mathop{\sum_{p,q \in T \cap P_{\delta^{1/2}}^{0}}}_{p \neq q} \frac{1}{|p - q|^{1 - s}} \gtrsim \sum_{T \in \calT_{e}^{C}} |P^{0}_{\delta^{1/2}} \cap T|^{2} \approx \frac{C^{2}}{\delta^{1/2}C} = \frac{C}{\delta^{1/2}}. \end{displaymath} 
This gives $C \approx \delta^{(s - 1)/2}$, as claimed. It has now been established that $\approx |P_{\delta^{1/2}}^{0}| \sim \delta^{-1/2}$ points of $P_{\delta^{1/2}}^{0}$ are covered by tubes $T \in \calT$ satisfying 
\begin{equation}\label{form7} |P_{\delta^{1/2}}^{0} \cap T| \approx \delta^{(s - 1)/2}. \end{equation}
In particular, this implies that there are $\approx \delta^{-s/2}$ tubes in $\calT$ satisfying \eqref{form7}. Finally, using Chebyshev's inequality and \eqref{form4}, one sees that $\approx \delta^{-s/2}$ out of the tubes satisfying \eqref{form7} also satisfy
\begin{equation}\label{form6} \mathop{\sum_{p,q \in T \cap P_{\delta^{1/2}}^{0}}}_{p \neq q} \frac{1}{|p - q|^{1 - s}} \lessapprox \delta^{s - 1}. \end{equation}
In the sequel, I am only interested in the tubes $T$ satisfying both \eqref{form7} and \eqref{form6}. There are $\approx \delta^{-s/2}$ such tubes, and they cover $\approx \delta^{-1/2}$ points of $P_{\delta^{1/2}}^{0}$. For notational convenience, I will continue denoting these tubes by $\calT$.

For $T \in \calT$, write
\begin{displaymath} P_{T} := \bigcup_{p \in T \cap P_{\delta^{1/2}}^{0}} P_{B_{p}}, \end{displaymath}
where $B_{p} \in \calB$ is the unique $\delta^{1/2}$-ball containing $p$ (thus $p = p_{B_{p}}$). Let $E_{\delta}$ be a maximal $\delta$-separated set inside $E \cap B(e_{0},\delta^{1/2})$. Recall from Section \ref{Edelta} that $E_{\delta}$ is a $(\delta,s)$-set with $|E_{\delta}| \approx \delta^{-s/2}$. For future reference, I already observe that
\begin{equation}\label{form30} \pi_{e}(P_{B}) \subset [\pi_{e_{0}}(P_{B})](C\delta), \quad e \in E_{\delta}, \: B \in \calB, \end{equation} 
for some absolute constant $C \geq 1$, where $A(\rho)$ stands for the $\rho$-neighbourhood of $A$. This follows from elementary geometry, recalling that $|e - e_{0}| \leq \delta^{1/2}$ and $\diam(B) = \delta^{1/2}$ for $B \in \calB$. Note that, for $e \in E_{\delta}$, the sets $\pi_{e}(P_{T})$, $T \in \calT$, have bounded overlap. Consequently, recalling also \eqref{form22},
\begin{displaymath} \delta^{-s - 2\epsilon_{0}} \geq N(\pi_{e}(P),\delta) \gtrsim \sum_{T \in \calT} N(\pi_{e}(P_{T}),\delta). \end{displaymath}
Hence
\begin{displaymath} \frac{1}{|E_{\delta}|} \sum_{T \in \calT} \sum_{e \in E_{\delta}} N(\pi_{e}(P_{T}),\delta) \lesssim \delta^{-s - 2\epsilon_{0}}. \end{displaymath}
Since $|\calT| \approx \delta^{-s/2} \sim |E_{\delta}|$, it follows that there is a tube $T_{0} \in \calT$ with 
\begin{equation}\label{form8} \sum_{e \in E_{\delta}} N(\pi_{e}(P_{T_{0}}),\delta) \lessapprox \delta^{-s}. \end{equation}
By Chebyshev's inequality, there exist $\sim \delta^{-s/2}$ vectors $e \in E_{\delta}$ with 
\begin{equation}\label{form9} N(\pi_{e}(P_{T_{0}}),\delta) \lessapprox \delta^{-s/2}. \end{equation}
Such vectors form a $(\delta,s)$-subset of $E_{\delta}$ with cardinality $\sim |E_{\delta}|$, so one may just as well assume that every vector in $E_{\delta}$ satisfies \eqref{form9}. 

Specify one of the vectors in $E_{\delta}$, say $e_{1} \in E_{\delta}$. For notational convenience, assume that
\begin{equation}\label{form10} e_{1} = (1,0). \end{equation}
By the definition \eqref{form32} of $P_{\delta^{1/2}}^{0}$, every projection $\pi_{e_{0}}(P_{B})$ with $p \in T_{0} \cap P_{\delta^{1/2}}^{0}$ contains a $(\delta,s)$-set of cardinality $\approx \delta^{-s/2}$. Moreover, by the simple geometric observation \eqref{form30}, the same remains true for $e_{1}$ in place of $e_{0}$. I denote by $\Delta_{B}$ a $(\delta,s)$-set with $\Delta_{B} \subset \pi_{e_{1}}(P_{B})$ and $|\Delta_{B}| \approx \delta^{-s/2}$.  

Recall the inequality \eqref{form6}, and that $|P_{\delta^{1/2}}^{0} \cap T_{0}| \approx \delta^{(s - 1)/2}$ by \eqref{form7}. Using Chebyshev's inequality, one can now choose a subset $P_{\delta^{1/2}}^{T_{0}} \subset P_{\delta^{1/2}}^{0} \cap T_{0}$ of cardinality $|P_{\delta^{1/2}}^{T_{0}}| \approx \delta^{(s - 1)/2}$ such that
\begin{displaymath} \mathop{\sum_{q \in P_{\delta^{1/2}}^{T_{0}}}}_{q \neq p} \frac{1}{|q - p|^{1 - s}} \lessapprox \delta^{(s - 1)/2}, \qquad p \in T_{\delta^{1/2}}^{T_{0}}. \end{displaymath}
In particular, this implies that $P_{\delta^{1/2}}^{T_{0}}$ is a $(\delta^{1/2},1 - s)$-set with cardinality $\approx \delta^{(s - 1)/2}$.

\subsubsection{Constructing a product-like set with small projections} I first define a set $A_{1}$, which will play the role of "$B$", once Proposition \ref{productProp} is eventually applied. As defined in the previous section, the set $P_{\delta^{1/2}}^{T_{0}}$ is a $(\delta^{1/2},1 - s)$-set with cardinality $\approx \delta^{(s - 1)/2}$. Its points are contained in the $\delta^{1/2}$-tube $T_{0}$ perpendicular to $e_{0}$. Since $|e_{0} - e_{1}| \leq \delta^{1/2}$, and I assumed in \eqref{form10} that $e_{1} = (1,0)$, this means that the projection to the $y$-axis restricted to $P_{\delta^{1/2}}^{T_{0}}$ is "nearly biLipschitz". In particular, the following holds. Write $p_{B} = (p_{B}^{x},p_{B}^{y})$. Then $\{p_{B}^{y} : p_{B} \in P_{\delta^{1/2}}^{T_{0}}\}$ contains a $(\delta^{1/2},1 - s)$-set $A_{1}$ of cardinality $|A_{1}| \approx \delta^{(s - 1)/2}$.


What follows next is a construction of a "product-like" set $F'$ with $|F'| \approx \delta^{-1/2}$ and $N(\pi_{e}(F'),\delta) \lesssim N(\pi_{e}(P_{T_{0}}),\delta)$ for $e \in E_{\delta}$. The set $F'$ will (essentially) play the role of "$P$", once Proposition \ref{productProp} is eventually applied.

The set $F'$ is of the form 
\begin{equation}\label{FPrime} F' = \bigcup_{b \in A_{1}} A_{b}' \times \{b\}, \quad A_{b}' \subset \R. \end{equation}
So, I fix $b \in A_{1}$ and define $A_{b}'$. Note that $b = p_{B}^{y}$ for some $B \in \calB_{0}$. Thus, recalling the $(\delta,s)$-set $\Delta_{B} \subset \pi_{e_{1}}(P_{B})$, I define
\begin{equation}\label{form26} A_{b}' := \Delta_{B}. \end{equation}
Since $|A_{b}'| = |\Delta_{B}| \approx \delta^{-s/2}$, and $|A_{1}| \approx \delta^{(s - 1)/2}$, the estimate $|F'| \approx \delta^{(s - 1)/2}\delta^{-s/2} = \delta^{-1/2}$ holds, as required. 

Next, it is time to control the projections of $F'$. Precisely, the claim is that
\begin{equation}\label{form12} N(\pi_{e}(F'),\delta) \lesssim N(\pi_{e}(P_{T_{0}}),\delta) \lessapprox \delta^{-s/2}, \quad e \in E_{\delta}. \end{equation} 
Recall from \eqref{form9} that $E_{\delta} \subset E \cap B(e_{0},\delta^{1/2}) \subset E \cap B(e_{1},2\delta^{1/2})$ is a set of cardinality $|E_{\delta}| \sim \delta^{-s/2}$ such that the second inequality in \eqref{form12} holds for all $e \in E_{\delta}$. 

To establish the first inequality, it suffices to prove the following: for every $e \in B(e_{1},2\delta^{1/2})$ and every point $q \in F'$, there is a point $p \in P_{T_{0}}$ such that $|\pi_{e}(q) - \pi_{e}(p)| \lesssim \delta$. This follows easily from the construction. Every point of $F'$ is of the form $q = (a,b)$, where $b = p_{B}^{y}$, and $a \in \Delta_{B} \subset \pi_{e_{1}}(P_{B})$. Consequently, there exists a point
\begin{displaymath} p \in P_{B} \subset P_{T_{0}} \end{displaymath}
such that $\pi_{e_{1}}(p) = a = \pi_{e_{1}}(q)$ and $|p - q| \lesssim \delta^{1/2}$. Moreover, it follows from $|p - q| \lesssim \delta^{1/2}$ that
\begin{displaymath} e \mapsto \pi_{e}(p) - \pi_{e}(q) = \pi_{e}(p - q) \end{displaymath}
only varies on an interval of length $\lesssim \delta$, as $e$ varies in $B(e_{1},2\delta^{1/2})$. This and the equation $\pi_{e_{1}}(p) = \pi_{e_{1}}(q)$ imply that $|\pi_{e}(p) - \pi_{e}(q)| \lesssim \delta$ for every $e \in B(e_{1},2\delta^{1/2})$, as required. The estimate \eqref{form12} has been established. 

\subsubsection{Dilating the product set and concluding the proof} The main accomplishment so far has been the construction of the set $F'$ of the form \eqref{FPrime}, which, by \eqref{form12}, has plenty of small projections. This almost looks like a scenario, where Theorem \ref{productProp} can be applied. In fact, all that remains is "normalisation" in terms of a horizontal dilatation.

To this end, it is convenient to re-parametrise the projections $\pi_{e}$, $e \in E_{\delta}$, as mappings of the form $\pi_{t}(x,y) = x + ty$. This is entirely standard, but here are the details: given $e = (\cos \theta,\sin \theta) \in E_{\delta} \in B(e_{1},2\delta^{1/2})$, note that $|\theta| \lesssim \delta^{1/2}$ by the assumption $e_{1} = (1,0)$. Hence one may assume that $\cos \theta \geq 1/2$, and
\begin{equation}\label{form14} \pi_{e}(x,y) = (x,y) \cdot (\cos \theta,\sin \theta) = \tfrac{1}{\cos \theta}\left[x + \tfrac{\sin \theta}{\cos \theta}y \right] =: \tfrac{1}{\cos \theta} \pi_{t(e)}(x,y). \end{equation}
Now, the information that $E_{\delta}$ is a $(\delta,s)$-set contained in an arc of length $\sim \delta^{1/2}$ translates to the statement that $\tilde{E}_{\delta} := \{t(e) : e \in E_{\delta}\}$ is a $(\delta,s)$-set contained in an interval around the origin with length $\sim \delta^{1/2}$. Note that $|\tilde{E}_{\delta}| \sim |E_{\delta}| \sim \delta^{-s/2}$, and I assume for convenience assume that $\tilde{E}_{\delta} \subset [0,\delta^{1/2}]$. Moreover, it is obvious from \eqref{form12} and the formula \eqref{form14} that
\begin{equation}\label{form15} N(\pi_{t}(F'),\delta) \lessapprox \delta^{-s/2}, \qquad t \in \tilde{E}_{\delta}. \end{equation}
Finally, a horizontal dilatation is applied. Consider the set 
\begin{displaymath} F := \{(\delta^{-1/2}x,y) : (x,y) \in F'\} = \bigcup_{b \in A_{1}} (\delta^{-1/2}A_{b}') \times \{b\}. \end{displaymath}
To complete the proof, observe that each set $A_{b} := \delta^{-1/2}A_{b}'$, $b \in A_{1}$, is a $(\delta^{1/2},s)$-set of cardinality $\approx \delta^{-s/2}$. The cardinality claim is clear from the definition \eqref{form26}, while the $(\delta^{1/2},s)$-set property follows from
\begin{equation}\label{form27} |B(x,r) \cap A_{b}| = |B(\delta^{1/2}x,\delta^{1/2}r) \cap A_{b}'| \lessapprox \left(\frac{\delta^{1/2}r}{\delta}\right)^{s} = \left(\frac{r}{\delta^{1/2}}\right)^{s}, \quad r \geq \delta^{1/2}. \end{equation}
Consequently, the sets $A_{1}$ (as "$B$"), $A_{b}$, $b \in A_{1}$, and $F$ (as "$P$") satisfy the hypotheses of Proposition \ref{productProp} at scale $\delta^{1/2}$. Moreover, $F$ has plenty of small projections. Consider the set $\tilde{E}_{\delta^{1/2}} := \delta^{-1/2}\tilde{E}_{\delta} \subset [0,1]$. Given $t' = \delta^{-1/2}t \in \tilde{E}_{\delta^{1/2}}$ and $(\delta^{-1/2}x,y) \in F$, observe that
\begin{displaymath} \pi_{t'}(\delta^{-1/2}x,y) = \delta^{-1/2}x + \delta^{-1/2}ty = \delta^{-1/2}\pi_{t}(x,y). \end{displaymath}
Recalling \eqref{form15}, it follows immediately that
\begin{equation}\label{form28} N(\pi_{t'}(F),\delta^{1/2}) = N(\pi_{t}(F'),\delta) \lessapprox \delta^{-s/2}, \quad t' \in \tilde{E}_{\delta^{1/2}}. \end{equation}
The set $\tilde{E}_{\delta^{1/2}}$ is clearly (or see \eqref{form27}) a $(\delta^{1/2},s)$-set with $|\tilde{E}_{\delta^{1/2}}| \sim \delta^{-s/2}$. Consequently, \eqref{form28} should not be possible by Proposition \ref{productProp}. A contradiction is thus reached, and the proof of Theorem \ref{main} is complete.

\end{document}